% !TeX encoding = UTF-8
% !TeE encoding = UTF-8
\documentclass[11pt,a4paper]{article}
\usepackage[utf8x]{inputenc}
\usepackage{ucs}
\usepackage{amsmath}
\usepackage{amsfonts}
\usepackage{amssymb}
\usepackage{amsthm}
\usepackage{graphicx}
\usepackage{hyperref}
\usepackage{bera}
\usepackage[width=17.00cm, height=26.50cm, left=2.00cm, top=1.00cm]{geometry}

\usepackage{color}

\newtheorem{theorem}{Theorem}
\newtheorem{lemma}[theorem]{Lemma}
\newtheorem{proposition}[theorem]{Proposition}
\newtheorem{corollary}[theorem]{Corollary}

\newcommand{\hp}{\mathcal{H}_{p} (E)}
\newcommand{\hpr}{\mathcal{H}_{p}^{\mathrm{rad}} (E)}
\newcommand{\hprm}{\mathcal{H}_{p,m}^{\mathrm{rad}} (E)}
\newcommand{\spr}{S_{p}^{\mathrm{rad}} (E)}
\newcommand{\sr}{\sigma^{\mathrm{rad}}}
\newcommand{\srh}{\sigma^{\mathrm{rad}}_{\mathcal{H}_{p}(E)}}

 \newcommand{\bj}{\mathbf j}

\DeclareMathOperator{\re}{Re}
\DeclareMathOperator{\Rad}{Rad}

\title{Almost sure-sign convergence of Hardy-type Dirichlet series}

\author{Daniel Carando\footnote{Departamento de Matematica - Pab I,
Facultad de Cs. Exactas y Naturales, Universidad de Buenos Aires, 1428 Buenos Aires (Argentina) and IMAS - CONICET (Argentina). Partially supported by CONICET-PIP 0624, PICT 2011-1456 and UBACyT 20020130100474BA}
\and
Andreas Defant\footnote{Institut f\"ur Mathematik. Universit\"at Oldenburg. D-26111 Oldenburg (Germany). Supported by MICINN  MTM2011-22417 }
\and
Pablo Sevilla-Peris \footnote{Instituto Universitario de Matem\'atica Pura y Aplicada. Universitat Polit\`ecnica de Val\`encia. 46022 Valencia (Spain). Supported by MICINN  MTM2011-22417 and UPV-SP201207000 }}

\date{}

\begin{document}

\maketitle

\begin{abstract}
\noindent Hartman proved in 1939 that the width of the largest  possible strip in the complex plane, on which a Dirichlet series $\sum_n a_n n^{-s}$ is  uniformly a.s.-sign convergent (i.e.,  $\sum_n \varepsilon_n a_n n^{-s}$ converges uniformly for almost all  sequences of signs $\varepsilon_n =\pm 1$) but does not  convergent absolutely, equals $1/2$. We study this result from a more modern point of view
 within the framework of so-called Hardy-type Dirichlet series with values in a Banach space
\end{abstract}

%\tableofcontents

\section{Introduction}

The natural domains of convergence of Dirichlet series are half-plains. Given a  Dirichlet series $D= \sum a_{n} n^{-s}$ there are  three abscissas which define the biggest half-plains on which $D$ converges, converges uniformly and converges absolutely:
\begin{equation} \label{Bohrabs}
\sigma_{c}(D) \leq \sigma_{u} (D) \leq \sigma_{a} (D) \, .
\end{equation}
Whereas it is not difficult to show that
 \begin{equation} \label{Bo}
 \sup_{D \text{ Dir. ser.}} \sigma_{a} (D) - \sigma_{u} (D)  =1  \,,
\end{equation}
 the  main issue in the 1910's was to decide what  the maximal width of the band on which a Dirichlet series can converge uniformly but not absolutely is. This problem was first considered by Harald Bohr and consists on computing the following number
\[
S : = \sup_{D \text{ Dir. ser.}} \sigma_{a} (D) - \sigma_{u} (D) \,.
\]
Bohr himself \cite{Bo13_Goett} showed in 1913 that $S \leq 1/2$\,,
 but it was not until 1931 that Bohnenblust and Hille \cite{BoHi31} proved that indeed
 \begin{equation} \label{BoBoHi}
S = 1/2\,.
\end{equation}
   The proof of the lower bound for $S$ by Bohnenblust and Hille is long and
involved. A few years  later Hartman gave in \cite{Ha39} a different proof for the lower bound, based on  probabilistic arguments. Let us be more precise. On $\{-1,1\}$ we consider the  probability $\mathbf{P}(-1)=\mathbf{P}(1)=1/2$ and on
$\{ -1 , 1\}^{\mathbb{N}}$ its product probability. From now on $(\varepsilon_{n})_{n}$ will always be a sequence of signs in $\{ -1 , 1\}^{\mathbb{N}}$.
We say  that $D= \sum a_{n} n^{-s}$ is (uniformly) a.s.-sign  convergent on a half plane
$[\re > \sigma]$ whenever $D= \sum \varepsilon_n a_{n} n^{-s}$ (uniformly) converges  on $[\re > \sigma]$ outside of a zero set of signs
$\varepsilon_n$.

Given a Dirichlet series $D= \sum a_{n} n^{-s}$, Hartman in \cite{Ha39}  (with a slightly different notation) considers the following abscissas
\begin{gather*}
\sr_{c} (D) := \inf \big\{ \sigma \in \mathbb{R} \colon\textstyle \sum a_{n} n^{-s} \text{ a.s.-sign convergent on } [\re > \sigma] \big\} \\
\sr_{u} (D) := \inf \big\{ \sigma \in \mathbb{R}\colon\textstyle \sum  a_{n} n^{-s} \text{ uniformly a.s.-sign  convergent on } [\re > \sigma] \big\}\,
\end{gather*}
(obviously, it doesn't make any sense to define an analogous notion like $\sr_{a} (D)$). In general the abscissas $\sr_{c}  (D)$
and $\sr_{u}  (D)$  are different from $\sigma_{c} (D)$
and $\sigma_{u} (D)$, respectively.
The two main result in Hartman's article are (compare with \eqref{Bo} and \eqref{BoBoHi})
\begin{align} \label{Ha1}
\sup_{D \text{ Dir. ser.}} \sigma_{a} (D) -  \sr_{c} (D)
= \frac{1}{2}
\end{align}
 and
\begin{align} \label{Ha2}
 \sup_{D \text{ Dir. ser.}} \sigma_{a} (D) -  \sr_{u}  (D) = \frac{1}{2}\,.
\end{align}
 Hartman in particular proves that $ \sup_{D} \sigma_{a} (D) -  \sr_{u}  (D)
\leq  \sup_{D} \sigma_{a} (D) -  \sigma_{u}  (D)$,  and estimating the first $\sup$ from below  he   produces a substantially different proof of the lower bound of $S$. His proof is probabilistic and goes through almost periodic functions with random Fourier coefficients.

Recently, many authors have shown  new interest in the Bohr-Bohnenblust-Hille circle of ideas
(see the recent monograph \cite{QuQu13} and also \cite{AlOlSa12,BaCaQu06,Ba02,BaPeSe14,BaQuSe15,BaDeFrMaSe14,CaDeSe14,Br08,
DeGaMaPG08,DeFrOrOuSe11,DeMaSc12,DeScSe14, HeLiSe97,  KoQu01, MaQu10,Qu95}), and probabilistic arguments have shown to be of great interest in this theory. Being Hartman's paper the first time when
such probabilistic arguments were used to deal with Dirichlet series,
our aim in this note is to look at his results from this more modern and  more general point of view that we believe clarifies the original argument.

\paragraph{Vector-valued Hardy-type Dirichlet series}
We are going to work with Hardy spaces of Dirichlet series with values in a Banach space. In this way we continue and extend our work from \cite{CaDeSe14}.
Given a Banach space $E$, we consider the one-to one correspondence  between the spaces  $\mathfrak{P}(E)$ (all formal power series $\sum_\alpha c_{\alpha} z^{\alpha}$ in infinitely many variables with coefficients $c_\alpha \in E$) and  $\mathfrak{D}(E)$
(all formal Dirichlet  series $\sum_n a_n \frac{1}{n^s}$ with coefficients $a_n \in E$)
\begin{equation} \label{willybrandt}
\mathfrak{P}(E) \longrightarrow \mathfrak{D}(E)
\,\,, \,\,\,\sum c_{\alpha} z^{\alpha}
\mapsto
 \sum a_{n} n^{-s}
 \end{equation}
given by $a_{n} = c_{\alpha }$ if
$n = p^{\alpha} = p_{1}^{\alpha_{1}} \cdots p_{k}^{\alpha_{k}}$,
where $p_{1}< p_{2} < p_{3} < \ldots$ stands for the sequence of prime numbers (see \cite{CaDeSe14}).

Let us recall the definition of  Hardy spaces  of $E$-valued Dirichlet series (first defined by Bayart for $E=\mathbb{C}$ in \cite{Ba02} and later for arbitrary $E$  in \cite{CaDeSe14}).
 For every $f \in L_{1} (\mathbb{T}^{\mathbb{N}};E)$ (the Banach space of Bochner integrable $E$-valued functions defined on the infinite dimensional torus $\mathbb{T}^{\mathbb{N}}$ with  the normalized Lebesgue measure $dz$) and every multi index $\alpha \in \mathbb{Z}^{(\mathbb{N})}$ (all finite sequences $\alpha = (\alpha_n)_{n \in \mathbb{Z}}$) we as usual  denote the $\alpha$th  Fourier coefficient of $f$ by
$
\hat{f} (\alpha) = \int_{\mathbb{T}^{\mathbb{N}} } f(z) z^{-\alpha} dz\,.
$
Now define for $1 \leq p < \infty$ the Hardy space
$$
H_{p} (\mathbb{T}^{\mathbb{N}};E) := \big\{ f \in L_{p} (\mathbb{T}^{\mathbb{N}};E) \colon \hat{f} (\alpha) \neq 0 \text{ only if } \alpha \in \mathbb{N}_{0}^{(\mathbb{N})} \big\}
$$
(with the norm induced by $L_{p} (\mathbb{T}^{\mathbb{N}};E)$) and let  $$\hp$$ by definition  be the image of the Banach space $H_{p} (\mathbb{T}^{\mathbb{N}};E)$ by the aforementioned correspondence~\eqref{willybrandt} (again with the norm coming from $H_{p} (\mathbb{T}^{\mathbb{N}};E)$).
We also consider
$$\mathcal{H}_{\infty}(E)\,,$$
 the space of $E$-valued Dirichlet series such that $\sum_{n} a_{n} \frac{1}{n^{s}}$ defines a bounded, holomorphic function on $[\re s > 0]$, with the norm $\Vert \sum a_{n} n^{-s} \Vert_{\mathcal{H}_{\infty} (E)} := \sup_{\re s >0} \Vert \sum_{n} a_{n} \frac{1}{n^{s}} \Vert_{E}$. We note that this Banach space through the identification in \eqref{willybrandt} coincides isometrically with $H_{\infty} (\mathbb{T}^{\mathbb{N}};E)$ if and only if
 $E$ has the analytic Radon-Nikodym property (see \cite{DePe}). In the scalar case we abbreviate
 \[
 \mathcal{H}_p = \mathcal{H}_{p}(\mathbb{C})\,,1 \leq p \leq \infty\,.
 \]
 Clearly, we have that
\begin{equation} \label{HeLiSe}
\mathcal{H}_2 = \Big\{ \textstyle \sum a_{n} n^{-s} \colon \|D\|_{\mathcal{H}_2}=\Big( \displaystyle\sum_{n=1}^{\infty} \vert a_{n} \vert^{2} \Big)^{\frac{1}{2}}< \infty \Big\} \,,
\end{equation}
 a Hilbert space intensively studied by  Hedenmalm, Lindqvist and Seip in \cite{HeLiSe97}.

\paragraph{State of art.}
  For any Banach space $E$ and any $1 \leq p \leq \infty$ define the  $\mathcal{H}_{p}(E)$-abscissa of a Dirichlet series $\sum a_n n^{-s}$ by
\[
\sigma_{\mathcal{H}_{p}(E)} (D) :  = \inf \big\{ \sigma \in \mathbb{R}\colon \textstyle\sum \frac{a_{n}}{n^{\sigma}} n^{-s} \in \hp  \big\} \,.
\]
  Then, given $1 \leq p \leq \infty$,  we have (for $p=\infty$ see \cite{BoHi31} (scalar case) and
 \cite[Theorem~1]{DeGaMaPG08} (vector-valued  case),  and  for $1 \leq p < \infty$ see \cite{BaCaQu06,Ba02}  (scalar case) and \cite{CaDeSe14} (vector-valued  case))
\begin{equation} \label{pariserplatz}
S_{p}(E) :=  \sup_{D \in \mathfrak{D}(E)}  \sigma_{a}(D) - \sigma_{\mathcal{H}_{p}(E)} = 1 - \frac{1}{\cot E} \,;
\end{equation}
recall that $E$ has cotype $q$ (with $2 \leq q < \infty$) if there is a constant $C>0$ such that for every finite choice of elements $x_{1}, \ldots , x_{N} \in E$ we have
$
\big( \sum_k \Vert x_{k} \Vert_{E}^{q}  \big)^{\frac{1}{q}}
\leq C \big( \int_{\mathbb{T}^{N}} \Vert \sum_k x_{k} z_{k} \Vert_{E}^{2} dz  \big)^{1/2}\,,
$
and
$$\cot E := \inf \Big\{ q \in [2, \infty[ \, \colon \,E \text{ has cotype } q \Big\}\,.$$
Let us comment on the special case $E= \mathbb{C}$ for which  $\cot \mathbb{C} =2$. For  $p=\infty$ we know by Bohr's fundamental theorem from \cite{Bo13_Goett} (see also \cite[Theorem~6.2.3]{QuQu13})
that
\begin{equation} \label{Bohrfun}
\sigma_u(D)=\sigma_{\mathcal{H}_\infty}(D)\,,
\end{equation}
 hence \eqref{pariserplatz} implies \eqref{BoBoHi}. For $p=2$ we have
 \begin{equation} \label{Kinfun}
\sr_c(D)=\sigma_{\mathcal{H}_2}(D)\,,
\end{equation}
so in this case \eqref{pariserplatz} implies \eqref{Ha1}. Indeed, by Khinchin's inequality it is well-known that a  scalar sequence $x=(x_n)$ is a.s.-sign summable (i.e., $\sum_n \varepsilon_n x_n$ converges for almost all possible choices of signs $\varepsilon_n$) if and only if  $x \in \ell_2$.
This, together with  \eqref{HeLiSe}, is what we need. We will come back to this issue later.\\

Is it also possible to recover \eqref{Ha2} within the setting of Hardy-type Dirichlet series?  Given $1 \leq p \leq \infty$ and a Banach space $E$, we  define what is going to be one of our our main objects,
\begin{equation} \label{ostbahnhof}
\hpr := \big\{ \textstyle\sum a_{n} n^{-s} \colon \forall \text{ a.e. } \varepsilon_{n} = \pm 1 \,,\,\,  \sum \varepsilon_{n} a_{n} n^{-s} \in \hp \big\} \,.
\end{equation}
Then, for a given Dirichlet series $D \in \mathfrak{D}(E)$, we consider the  abscissa
\[
\srh (D)  := \inf \big\{ \sigma \in \mathbb{R} \colon \textstyle\sum \frac{a_{n}}{n^{\sigma}} n^{-s} \in \hpr  \big\} \,,
\]
and again the aim is to determine  the maximal distance between $\sigma_a(D)$ and $\srh (D)$.
\paragraph{Summary.} Our first main result (Theorem~\ref{alexanderplatz1}) is a proper extension of Hartman's main result from \eqref{Ha2} and an analogue of \eqref{pariserplatz} in the setting of a.s.-sign convergence of Hardy-type Dirichlet series: For every Banach space $E$ and $1 \leq p \leq \infty$
\begin{equation}\label{alexanderplatz}
\spr :=  \sup_{D \in \mathfrak{D}(E)}  \sigma_{a}(D) - \srh (D)  \,=\, 1 - \frac{1}{\cot E} \,.
\end{equation}
Indeed, \eqref{alexanderplatz} recovers  \eqref{Ha2}  since  $\cot \mathbb{C} =2$  and
 $\sr_{\mathcal{H}_{\infty}}(D)$ is the abscissa $\sr_{u}(D)$ defined by Hartman.
 Moreover, we show that $\spr \leq S_p(E)$  (Corollary~\ref{friedrichstr}), hence \eqref{alexanderplatz} also recovers
 \eqref{pariserplatz}.
For the proof of \eqref{alexanderplatz} we distinguish between finite and infinite dimensional Banach spaces $E$. In the rest of our article we graduate \eqref{alexanderplatz}.
Following an idea from \cite{BoHi31}, we give precise estimates for the  $m$th graduation of
$\spr$ along $m$-homogeneous Dirichlet series (see Proposition~\ref{contrast} and Proposition~\ref{17juni}).
In the scalar case, we graduate   \eqref{alexanderplatz} along the length of the considered Dirichlet series; here our main results are Theorem~\ref{main} and Theorem~\ref{we finish}. Finally, in the Appendix we
show the remarkable fact that the maximum width of the strips of a.s.-sign but not absolute convergence and of uniform a.s.-sign but not absolute convergence coincide (see \eqref{Ha1} and \eqref{Ha2}) extends to the vector-valued case.

\section{The Banach space $\boldsymbol{\hpr}$}
In this section we collect a few facts on $\hpr$ needed later.
First of all, we need a norm on $\hpr$. We denote by $(r_{n})_{n}$ the system of Rademacher functions on $[0,1]$. We are going to use the following, fundamental for us,
fact (see e.g. \cite[Theorem~12.3]{DiJaTo95}): Given a sequence $(x_{n})_{n}$ in a Banach space $X$, the series $\sum_{n} r_{n} x_{n}$ converges almost surely if and only if $\sum_{n} r_{n} x_{n}$ converges in $L_{p} ([0,1];X)$ for some (and then all)
$0<p<\infty$.

Clearly,  $\sum_{n} r_{n} x_{n}$ converges a.e. is another way to say that $\sum_{n} \varepsilon_{n} x_{n}$ is a.s.-sign convergent.
Then taking $X=\hp$ and $x_{n} = a_{n} n^{-s} \in \hp$ we can reformulate our space $\hpr$ defined in \eqref{ostbahnhof} as follows:
\[
\hpr = \Big\{ \textstyle\sum a_{n} n^{-s} \colon \sum_{n} r_{n} a_{n} n^{-s} \in L_{1} ([0,1] ; \hp) \Big\} \,,
\]
and define the norm
\begin{equation} \label{spree}
\Big\Vert \sum a_{n} n^{-s} \Big\Vert_{\hpr} := \int_{0}^{1} \Big\Vert \sum_{n} r_{n} (t) a_{n} n^{-s} \Big\Vert_{\hp} dt \,.
\end{equation}
 We need $\hpr$ to be a Banach space. This follows from the following general result. First, recall (see \cite[page~233]{DiJaTo95}) that for a given Banach space $X$, the space  $\Rad(X)$  of almost unconditionally summable sequences $(x_n)_n $ in $X$ together with the norm
$$\Vert (x_{n})_{n} \Vert_{\Rad(X)} = \int_{0}^{1} \Big\Vert \sum_{n} r_{n} (t) x_{n} \Big\Vert_{X} dt$$ forms  a Banach space.

\begin{lemma} \label{spandauer}
Let $X$ be a Banach space and let $Y_{n}\,, n \in \mathbb{N}$ be closed subspaces of $X$. Then $Y=\{ (x_{n})_{n} \in \Rad (X) \colon x_{n} \in Y_{n} \}$ is closed in $\Rad (X)$.
\end{lemma}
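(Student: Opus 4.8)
The plan is to show that $Y$ is closed by verifying it contains the limit of any convergent sequence of its elements. So I would start with a sequence $(x^{(k)})_k = ((x^{(k)}_n)_n)_k$ in $Y$ converging in $\Rad(X)$ to some element $x = (x_n)_n \in \Rad(X)$; the goal is to prove $x_n \in Y_n$ for every $n$. Since each $Y_n$ is closed in $X$, it suffices to show that $x^{(k)}_n \to x_n$ in $X$ for each fixed $n$ as $k \to \infty$.

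The key step, therefore, is to establish that the coordinate projections $\pi_n \colon \Rad(X) \to X$, $(x_m)_m \mapsto x_n$, are continuous (indeed bounded with norm at most $1$). This follows from a standard averaging/orthogonality argument using the Rademacher functions: for a finitely supported sequence one has
\[
x_n = \int_0^1 r_n(t) \Big( \sum_m r_m(t) x_m \Big)\, dt\,,
\]
because $\int_0^1 r_n(t) r_m(t)\, dt = \delta_{nm}$. Taking norms inside the integral and using $|r_n(t)| = 1$ gives $\|x_n\|_X \le \int_0^1 \|\sum_m r_m(t) x_m\|_X\, dt = \|(x_m)_m\|_{\Rad(X)}$. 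For a general element of $\Rad(X)$ this identity passes to the limit through the defining finite sections (which converge in $\Rad(X)$ by definition of the space), so the bound $\|\pi_n\| \le 1$ holds on all of $\Rad(X)$.

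With continuity of $\pi_n$ in hand the rest is immediate: from $x^{(k)} \to x$ in $\Rad(X)$ we get $x^{(k)}_n = \pi_n(x^{(k)}) \to \pi_n(x) = x_n$ in $X$; since $x^{(k)}_n \in Y_n$ and $Y_n$ is closed, $x_n \in Y_n$. As this holds for every $n$, we conclude $x \in Y$, so $Y$ is closed.

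I expect the only genuinely delicate point to be the justification that the orthogonality formula for $x_n$ extends from finitely supported sequences to arbitrary elements of $\Rad(X)$; this needs a small argument that the finite truncations $(x_1, \dots, x_N, 0, 0, \dots)$ converge to $(x_m)_m$ in the $\Rad(X)$-norm (which is part of the definition of $\Rad(X)$ as the closure of finitely supported sequences, or follows from the characterization of almost unconditional summability), together with the continuity of the linear functional $\xi \mapsto \int_0^1 r_n(t)\, \xi(t)\, dt$ on $L_1([0,1];X)$. Everything else is routine.
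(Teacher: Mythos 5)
Your proposal is correct and follows essentially the same route as the paper: both prove $\Vert x_n\Vert_X \le \Vert x\Vert_{\Rad(X)}$ via the orthogonality of the Rademacher functions (the identity $x_n=\int_0^1 r_n(t)\sum_m r_m(t)x_m\,dt$, justified because $\sum_m r_m x_m$ converges in $L_1([0,1];X)$ for any element of $\Rad(X)$), and then deduce closedness of $Y$ from coordinatewise convergence together with the closedness of each $Y_n$. The ``delicate point'' you flag is handled in the paper exactly as you suggest, by interchanging sum and integral using convergence of the series in $L_1([0,1];X)$, so no further argument is needed.
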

\begin{proof}
Let us observe first that if $x=(x_{n})_{n} \in \Rad(X)$, then $\sum_{m} r_{m} x_{m} \in L_{1}([0,1];X)$. Due to the orthogonality of the Rademacher system we have
\[
x_{n} = \sum_{m} x_{m} \int_{0}^{1} r_{m}(t) r_{n} (t) dt =   \int_{0}^{1} \Big( \sum_{m} x_{m}r_{m}(t) \Big) r_{n} (t) dt \,,
\]
and this gives
\[
\Vert x_{n} \Vert_{X} \leq  \int_{0}^{1} \Big\Vert \sum_{m} x_{m}r_{m}(t) \Big\Vert_{X} \vert r_{n} (t) \vert dt=
 \int_{0}^{1} \Big\Vert \sum_{m} x_{m}r_{m}(t) \Big\Vert_{X} dt = \Vert x \Vert_{\Rad(X)} \,.
\]
Let us take now $(x^{(m)})_{m} \in Y$ that converges to a certain $x$ in $\Rad (X)$. We write $x^{(m)} = (x^{(m)}_{n})_{n}$ and $x=(x_{n})_{n}$, then $\Vert x^{(m)}_{n} - x_{n} \Vert_{X} \leq \Vert x^{(m)} - x \Vert_{\Rad(X)}$, and hence, for each fixed $n$,
the sequence $x^{(m)}_{n}$ converges to $x_{n}$ as $m \to \infty$. Since $x^{(m)}_{n} \in Y_{n}$ for every $n$ and all $Y_n$ are closed, we have $x_{n} \in Y_{n}$ for all $n$, or equivalently  $x \in Y$.
\end{proof}

\begin{proposition} \label{potsdamer}
For every $1 \leq p \leq \infty$ and every Banach space $E$ the space $\hpr$ endowed with the norm defined in \eqref{spree} is a Banach space.
\end{proposition}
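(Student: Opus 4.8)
The plan is to realize $\hpr$ as a closed subspace of $\Rad(\hp)$ and then invoke Lemma~\ref{spandauer}. First I would observe that, by the reformulation given just before \eqref{spree}, a Dirichlet series $\sum a_n n^{-s}$ lies in $\hpr$ precisely when the sequence of its ``pieces'' $x_n := a_n n^{-s}$ belongs to $\Rad(\hp)$, and that the norm \eqref{spree} is exactly the $\Rad(\hp)$-norm of that sequence. So the map $\sum a_n n^{-s} \mapsto (a_n n^{-s})_n$ is a linear isometry of $\hpr$ onto the subspace
\[
Y \;=\; \big\{ (x_n)_n \in \Rad(\hp) \colon x_n \in Y_n \text{ for all } n \big\},
\]
where $Y_n \subseteq \hp$ is the (one-dimensional, hence closed) subspace of Dirichlet series supported on the single frequency $n$, i.e. $Y_n = \{ a\, n^{-s} \colon a \in E\}$. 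Actually one must be slightly careful: $Y_n$ should be $\{a n^{-s} : a \in E\}$, which is isometrically a copy of $E$ inside $\hp$ and is closed because $E$ is complete and the coordinate functional $\sum b_m m^{-s} \mapsto b_n$ is bounded on $\hp$ (this follows from the monomial coefficients being continuous functionals on $H_p(\TT^\infty;E)$, a standard fact).

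Granting that $Y$ is closed in $\Rad(\hp)$, completeness of $\hpr$ is immediate: $\Rad(\hp)$ is a Banach space by the cited fact from \cite[page~233]{DiJaTo95} (applied with $X = \hp$, which is itself a Banach space), a closed subspace of a Banach space is a Banach space, and an isometry transports completeness. So the only real content is the closedness of $Y$, and that is precisely what Lemma~\ref{spandauer} delivers once we have identified the $Y_n$ as closed subspaces of $X = \hp$. Thus the proof is essentially a two-line application of Lemma~\ref{spandauer}.

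The one point that deserves a sentence of justification — and the only place where anything could go wrong — is why each $Y_n$ is closed in $\hp$. I would argue that the linear map $\hp \to E$ sending $\sum b_m m^{-s}$ to its $n$th coefficient $b_n$ is continuous: translated through the correspondence \eqref{willybrandt} it is just $f \mapsto \hat f(\alpha)$ for the multi-index $\alpha$ with $p^\alpha = n$, and $\|\hat f(\alpha)\|_E = \|\int_{\TT^\infty} f(z) z^{-\alpha}\,dz\|_E \le \|f\|_{L_p(\TT^\infty;E)}$ by Jensen/Hölder. Hence the kernels $\{ D \in \hp : b_m(D) = 0 \ \forall m\neq n\}$ are intersections of kernels of bounded functionals, so closed; and $Y_n$ is exactly such an intersection. (Alternatively, $Y_n$ is finite-dimensional when $E$ is, but the functional argument works uniformly in $E$.) With that in hand, Lemma~\ref{spandauer} applies verbatim.

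In summary: identify $\hpr$ isometrically with $Y = \{(x_n) \in \Rad(\hp) : x_n \in Y_n\}$ where $Y_n = \{a n^{-s} : a\in E\}$; note $Y_n$ is closed in $\hp$ because the $n$th-coefficient functional is bounded; apply Lemma~\ref{spandauer} to conclude $Y$ is closed in the Banach space $\Rad(\hp)$; therefore $Y$, and hence $\hpr$, is a Banach space. I expect no genuine obstacle here — the main (minor) subtlety is simply spelling out the boundedness of the coefficient functionals on $\hp$, which is where one should be careful not to hand-wave.
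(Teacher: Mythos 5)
Your proof is correct and follows essentially the same route as the paper: identify $\hpr$ isometrically with the subspace $\{(x_n)_n \in \Rad(\hp) : x_n \in Y_n\}$, where $Y_n = \{a\,n^{-s} : a \in E\}$, and apply Lemma~\ref{spandauer}. The only (immaterial) difference is your justification that each $Y_n$ is closed via boundedness of the $n$th-coefficient functional, whereas the paper simply notes that $Y_n$ is isometric to $E$, hence complete and therefore closed.
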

\begin{proof}
Note that our space $\hpr$ is actually a subspace of $\Rad (\hp)$:
\[
\hpr = \big\{ (D_{n})_{n} \in \Rad (\hp) \colon  D_{n} = a_{n} n^{-s} \,,\,\, a_{n} \in E  \big\} \,.
\]
Observe that each $F_{n} = \{a_{n} n^{-s} \colon a_{n} \in E   \} \subseteq \hp$ is isometric to $E$ and hence closed. This by Lemma~\ref{spandauer} completes the proof.
\end{proof}

The following result  is crucial for the modern theory of Dirichlet series: For each $1 \leq p \leq \infty $ there is a constant $C_{p}>1$ such that  for any Banach space $E$ and
any Dirichlet series $\sum a_n n^{-s} \in \mathcal{H}_p(E)$ we for every $N$ have
\begin{equation} \label{gendarmenmarkt}
\Big\Vert \sum_{n=1}^{N} a_{n} n^{-s} \Big\Vert_{\mathcal{H}_{p} (E)}
\leq C_{p} \log N \Big\Vert \sum_{n=1}^{\infty} a_{n} n^{-s} \Big\Vert_{\mathcal{H}_{p} (E)} \,.
\end{equation}
 For $E= \mathbb{C}$ and $p = \infty$   this is a quantification of \eqref{Bohrfun} given in \cite[Lemma~1.1]{BaCaQu06} (see also \cite[Theorem~6.2.2]{QuQu13}), and for  $E= \mathbb{C}$ and $p=1$ it is \cite[Theorem~3.2]{BaQuSe15}.
 For $E= \mathbb{C}$ and $1 < p <\infty$ the situation is even better, since by \cite{AlOlSa12}, the system $(n^{-s})_{n \in \mathbb{N}}$ then forms a Schauder basis of $\mathcal{H}_{p} (\mathbb{C})$;
 hence in this situation the $\log$-term even disappears. The  vector-valued case needs an alternative approach -- see  \cite{DeGaMaSe14} for a proof which again is very much in the spirit of the starting case $E= \mathbb{C}$ and $p = \infty$ (so of Bohr's original ideas). For our new spaces $ \hpr$ the situation is much simpler.
\begin{proposition} \label{unterdenlinden}
If $1 \leq p \leq \infty$, $E$ is a Banach space and  $\sum a_{n} n^{-s} \in \hpr$, then for every $N$ we have
\[
\Big\Vert \sum_{n=1}^{N} a_{n} n^{-s} \Big\Vert_{\hpr}
\leq  \Big\Vert \sum_{n=1}^{\infty} a_{n} n^{-s} \Big\Vert_{\hpr} \,.
\]
Moreover, the sequence of partial sums converges to $\sum_{n=1}^{\infty} a_{n} n^{-s}$ in $\hpr$.
\end{proposition}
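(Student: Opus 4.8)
The plan is to read off both claims from the embedding $\hpr \subseteq \Rad(\hp)$ used in the proof of Proposition~\ref{potsdamer}, combined with the contraction principle for Rademacher sums. Put $x_n := a_n n^{-s} \in \hp$; then by hypothesis $(x_n)_n \in \Rad(\hp)$, and by the very definition of the norm \eqref{spree},
$$
\Big\Vert \sum_{n=1}^{N} a_{n} n^{-s} \Big\Vert_{\hpr} = \int_{0}^{1} \Big\Vert \sum_{n=1}^{N} r_{n}(t) x_{n} \Big\Vert_{\hp}\, dt\,,
\qquad
\Big\Vert \sum_{n=1}^{\infty} a_{n} n^{-s} \Big\Vert_{\hpr} = \int_{0}^{1} \Big\Vert \sum_{n=1}^{\infty} r_{n}(t) x_{n} \Big\Vert_{\hp}\, dt\,.
$$
So the first assertion amounts to the inequality between these two integrals.

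For that inequality I would use the standard symmetrization trick. Let $\eta = (\eta_n)_n$ be the sign sequence with $\eta_n = 1$ for $n \leq N$ and $\eta_n = -1$ for $n > N$; the induced map $(\veps_n)_n \mapsto (\eta_n \veps_n)_n$ on $\{-1,1\}^{\mathbb N}$ is measure preserving, and transported to $[0,1]$ it yields a measure-preserving transformation $t \mapsto t^{*}$ with $r_n(t^{*}) = \eta_n r_n(t)$ for a.e.\ $t$. Since $(x_n)_n \in \Rad(\hp)$, the series $\sum_n r_n(\cdot) x_n$ and its tail $\sum_{n>N} r_n(\cdot) x_n$ converge a.e.\ (by the equivalence of a.e.\ and $L_p$-convergence of Rademacher series recalled before Lemma~\ref{spandauer}), whence
$$
\sum_{n=1}^{N} r_{n}(t) x_{n} = \frac{1}{2}\Big( \sum_{n=1}^{\infty} r_{n}(t) x_{n} + \sum_{n=1}^{\infty} r_{n}(t^{*}) x_{n} \Big)
$$
for a.e.\ $t$. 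Taking $\hp$-norms, applying the triangle inequality, integrating over $[0,1]$ and using the invariance of the integral under $t \mapsto t^{*}$ gives exactly $\Vert \sum_{n=1}^{N} a_n n^{-s}\Vert_{\hpr} \leq \Vert \sum_{n=1}^{\infty} a_n n^{-s}\Vert_{\hpr}$. (Equivalently, this is the Kahane contraction principle applied with the $0$–$1$ multipliers $(\mathbf{1}_{\{n\leq N\}})_n$.)

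For the ``moreover'' part, convergence of the partial sums is immediate from the definition of $\Rad(\hp)$: because $(x_n)_n \in \Rad(\hp)$, the series $\sum_n r_n x_n$ converges in $L_1([0,1];\hp)$, hence $\int_0^1 \Vert \sum_{n>N} r_n(t) x_n\Vert_{\hp}\,dt \to 0$ as $N\to\infty$. But the left-hand side is precisely $\Vert \sum_{n=1}^{\infty} a_n n^{-s} - \sum_{n=1}^{N} a_n n^{-s} \Vert_{\hpr}$, so the partial sums converge to $\sum_{n=1}^{\infty} a_n n^{-s}$ in $\hpr$. There is essentially no obstacle in this argument; the only point requiring a word of care is the a.e.\ (equivalently $L_1$) convergence of the tail series $\sum_{n>N} r_n(t) x_n$, which is exactly what membership $(x_n)_n \in \Rad(\hp)$ provides.
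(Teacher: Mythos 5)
Your argument is correct and follows essentially the same route as the paper: the inequality is the Kahane contraction principle with the multipliers $\lambda_n=\mathbf{1}_{\{n\le N\}}$ (you simply re-derive it for the infinite sum by the sign-flip symmetrization, using the a.e.\ convergence guaranteed by membership in $\Rad(\hp)$, whereas the paper applies the finite contraction principle to partial sums of length $M>N$ and then lets $M\to\infty$ via the $L_1([0,1];\hp)$-convergence of $\sum_n r_n a_n n^{-s}$), and the ``moreover'' part is, in both proofs, exactly that $L_1$-convergence read through the definition of the $\hpr$-norm. No gaps.
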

\begin{proof}
Let us fix $\sum a_{n} n^{-s} \in \hpr$ and $N \in \mathbb{N}$. We define $\lambda_{n} = 1$ for $1 \leq n \leq N$ and $\lambda_{n} = 0$ for $n >N$. We use now the Contraction Principle (see e.g. \cite[Theorem~12.2]{DiJaTo95}) to get that for $N<M$,
\begin{multline*}
\Big\Vert \sum_{n=1}^{N} a_{n} n^{-s} \Big\Vert_{\hpr}
= \int_{0}^{1} \Big\Vert \sum_{n=1}^{N} r_{n}(t) a_{n} n^{-s} \Big\Vert_{\hp} dt
= \int_{0}^{1} \Big\Vert \sum_{n=1}^{M} \lambda_{n} r_{n}(t) a_{n} n^{-s} \Big\Vert_{\hp} dt
\\
\leq \int_{0}^{1} \Big\Vert \sum_{n=1}^{M} r_{n}(t) a_{n} n^{-s} \Big\Vert_{\hp} dt
= \Big\Vert \sum_{n=1}^{M} a_{n} n^{-s} \Big\Vert_{\hpr} \,.
\end{multline*}
By \cite[Theorem~12.3]{DiJaTo95} the series $\sum r_{n} a_{n} n^{-s}$ converges in $L_{1} ([0,1]; \hp)$, hence
\[
\Big\Vert \sum_{n=1}^{M} r_{n} a_{n} n^{-s} \Big\Vert_{L_{1}}
\longrightarrow \Big\Vert \sum_{n=1}^{\infty} r_{n} a_{n} n^{-s} \Big\Vert_{L_{1}} \,\,\, \text{ as }\,\,\,M \to \infty\,.
\]
By the very definition of the norm in $\hpr$ this gives the conclusion.
\end{proof}

Our next result shows that for $1 \leq p < \infty$  the  study of $\mathcal{H}_{p}^{\mathrm{rad}} (\mathbb{C})$ reduces to the study of $\mathcal{H}_2$ (see also Proposition~\ref{europaplatz} for a vector-valued extension).
\begin{proposition} \label{brandenburgertor}
For  $1 \leq p < \infty$ we have $\mathcal{H}_{p}^{\mathrm{rad}} (\mathbb{C}) = \mathcal{H}_2$ \,.
\end{proposition}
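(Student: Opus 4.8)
The plan is to show both inclusions of the identity $\mathcal{H}_p^{\mathrm{rad}}(\mathbb{C}) = \mathcal{H}_2$ by reducing everything, via the definition of the $\mathcal{H}_p^{\mathrm{rad}}$-norm from \eqref{spree} and the Rademacher reformulation, to the classical fact (already quoted in the introduction) that a scalar sequence is almost surely sign-summable precisely when it is in $\ell_2$. Concretely, given $D = \sum a_n n^{-s}$, the condition $D \in \mathcal{H}_p^{\mathrm{rad}}(\mathbb{C})$ means $\sum_n r_n a_n n^{-s}$ converges in $L_1([0,1];\mathcal{H}_p(\mathbb{C}))$, and by \cite[Theorem~12.3]{DiJaTo95} this is equivalent to convergence in $L_q([0,1];\mathcal{H}_p(\mathbb{C}))$ for any fixed $q \in (0,\infty)$; the natural choice is $q = p$.

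The key step is then to swap the two integrations (over $[0,1]$ in the Rademacher variable $t$ and over $\mathbb{T}^{\mathbb{N}}$ in the variable $z$ hidden inside the $\mathcal{H}_p(\mathbb{C}) = H_p(\mathbb{T}^{\mathbb{N}})$ norm). Writing out $\big\| \sum_n r_n(t) a_n n^{-s}\big\|_{\mathcal{H}_p}^p$ as $\int_{\mathbb{T}^{\mathbb{N}}} \big| \sum_n r_n(t) a_n z^{\alpha(n)}\big|^p\, dz$, where $z^{\alpha(n)}$ is the monomial corresponding to $n$ under \eqref{willybrandt}, and integrating in $t$, Fubini gives
\[
\int_0^1 \Big\| \sum_n r_n(t) a_n n^{-s} \Big\|_{\mathcal{H}_p}^p\, dt
= \int_{\mathbb{T}^{\mathbb{N}}} \int_0^1 \Big| \sum_n r_n(t) a_n z^{\alpha(n)} \Big|^p\, dt\, dz \,.
\]
For each fixed $z \in \mathbb{T}^{\mathbb{N}}$ the inner integral is, by Khinchin's inequality, comparable (with constants depending only on $p$) to $\big(\sum_n |a_n z^{\alpha(n)}|^2\big)^{p/2} = \big(\sum_n |a_n|^2\big)^{p/2}$, which is independent of $z$. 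Hence $\big\| \sum_{n=1}^N r_n a_n n^{-s}\big\|_{L_p([0,1];\mathcal{H}_p)}^p \asymp_p \big(\sum_{n=1}^N |a_n|^2\big)^{p/2}$ for every $N$, and the same comparison holds for tail sums. Therefore the partial sums form a Cauchy sequence in $L_p([0,1];\mathcal{H}_p(\mathbb{C}))$ if and only if $(a_n) \in \ell_2$, i.e. if and only if $D \in \mathcal{H}_2$ by \eqref{HeLiSe}; and when this happens the two norms are equivalent. One should note the two-sided Khinchin estimate also directly yields that the $\mathcal{H}_p^{\mathrm{rad}}$-norm and the $\mathcal{H}_2$-norm are equivalent, though not equal, which is consistent with the statement (an equality of spaces, not an isometry).

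The main obstacle is purely bookkeeping: making the Fubini step rigorous for $p = \infty$, since there $\mathcal{H}_\infty(\mathbb{C})$ is a sup-norm space and the clean identity $\|\cdot\|_{\mathcal{H}_p}^p = \int_{\mathbb{T}^\mathbb{N}} |\cdot|^p\,dz$ is unavailable — but this is irrelevant here because the statement is restricted to $1 \le p < \infty$. Within that range the only genuine care needed is to justify the interchange of $\int_0^1$ and $\int_{\mathbb{T}^\mathbb{N}}$ for the partial sums (finite sums, so trivial) and to pass to the limit using \cite[Theorem~12.3]{DiJaTo95} together with the quantitative Khinchin bound for the tails; the convergence of partial sums in $\mathcal{H}_p^{\mathrm{rad}}(\mathbb{C})$ is then automatic from Proposition~\ref{unterdenlinden}. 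I would also remark that this argument simultaneously identifies $\mathcal{H}_p^{\mathrm{rad}}(\mathbb{C})$ with $\ell_2$ as sets, which is what makes \eqref{Kinfun} and hence \eqref{Ha1} transparent in this framework.
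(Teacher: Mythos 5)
Your proposal is correct and follows essentially the same route as the paper's own proof: pass from the $L_1$- to the $L_p$-norm of the Rademacher sums (Kahane, via \cite[Theorem~12.3]{DiJaTo95}), apply Fubini on finite sums, use Khinchin's inequality for each fixed $z\in\mathbb{T}^{\mathbb{N}}$, and exploit $|z^{\alpha}|=1$ to land on $\bigl(\sum_n|a_n|^2\bigr)^{1/2}$ and hence on $\mathcal{H}_2$ via \eqref{HeLiSe}. Your extra care with tail sums and the limiting step merely spells out what the paper compresses into its final line, so no substantive difference remains.
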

\begin{proof}
For fixed  $N \in \mathbb{N}$  use the definition of $\mathcal{H}_{p}^{\mathrm{rad}} (\mathbb{C})$,  Kahane's inequality (see e.g. \cite[Theorem~11.1]{DiJaTo95}), the definition of $\mathcal{H}_{p} (\mathbb{C})$, and finally Khinchin's inequality (see e.g.\cite[Theorem~1.10]{DiJaTo95}) in order to get
\begin{align*}
\Big\Vert \sum_{n=1}^{N} a_{n} n^{-s} \Big\Vert_{\mathcal{H}_{p}^{\mathrm{rad}} (\mathbb{C})}
& = \int_{0}^{1} \Big\Vert \sum_{n=1}^{N} r_{n} (t) a_{n} n^{-s} \Big\Vert_{\mathcal{H}_{p} (\mathbb{C})} dt
\sim \bigg(  \int_{0}^{1} \Big\Vert \sum_{n=1}^{N} r_{n} (t) a_{n} n^{-s} \Big\Vert_{\mathcal{H}_{p} (\mathbb{C})}^{p} dt \bigg)^{\frac{1}{p}} \\
&= \bigg( \int_{0}^{1} \int_{\mathbb{T}^{\mathbb{N}}} \Big\vert \sum_{\alpha} r_{p^{\alpha}} (t) a_{p^{\alpha}} z^{\alpha} \Big\vert^{p} dz dt \bigg)^{\frac{1}{p}}
= \bigg( \int_{\mathbb{T}^{\mathbb{N}}} \int_{0}^{1} \Big\vert \sum_{\alpha} r_{p^{\alpha}} (t) a_{p^{\alpha}} z^{\alpha} \Big\vert^{p} dt dz  \bigg)^{\frac{1}{p}}\\
& \sim \bigg( \int_{\mathbb{T}^{\mathbb{N}}} \Big( \sum_{\alpha} \vert a_{p^{\alpha}} z^{\alpha} \vert^{2} \Big)^{\frac{p}{2}} dz  \bigg)^{\frac{1}{p}}
= \bigg( \int_{\mathbb{T}^{\mathbb{N}}} \Big( \sum_{\alpha} \vert a_{p^{\alpha}} \vert^{2} \Big)^{\frac{p}{2}} dz  \bigg)^{\frac{1}{p}} \\
& = \Big( \sum_{\alpha} \vert a_{p^{\alpha}} \vert^{2} \Big)^{\frac{1}{2}}
=  \Big( \sum_{n=1}^{N}  \vert a_{n} \vert^{2} \Big)^{\frac{1}{2}}.
\end{align*}
This gives the conclusion.
\end{proof}

It is not surprising  that in the vector-valued situation such a   description of $\mathcal{H}_{p}^{\mathrm{rad}} (E)$ is more involved. However, if the space $E$ is nice enough we do have something. Let us recall
\cite[page~46]{LiTz79} that a Banach lattice $E$ is $q$-concave (with $1 \leq q < \infty$) if there exists a constant $C>0$ such that for every choice $x_{1}, \ldots , x_{N} \in E$
\[
\bigg( \sum_{n=1}^{N} \Vert x_{n} \Vert^{q}  \bigg)^{\frac{1}{q}} \leq C \bigg\Vert \Big(   \sum_{n=1}^{N} \vert x_{n} \vert^{q} \Big)^{\frac{1}{q}} \bigg\Vert \,.
\]
For a Banach lattice $E$ we define $\widetilde{E(\ell_{2})}$ to be the space of sequences $(x_{n})_{n=1}^{\infty}$ in $E$ such that
\[
\Vert (x_{n})_{n} \Vert_{\widetilde{E(\ell_{2})}}  = \sup_{N} \bigg\Vert \Big(   \sum_{n=1}^{N} \vert x_{n} \vert^{2} \Big)^{\frac{1}{2}} \bigg\Vert_{E} < \infty \,.
\]
The closure in $\widetilde{E(\ell_{2})}$ of the subspace of finite sequences is denoted by $E(\ell_2)$. We remark that these two spaces coincide if and only if $E$ is weakly sequentially complete (see \cite[p. 46]{LiTz79} for details).
\begin{proposition} \label{europaplatz}
If $E$ is a Banach lattice that is $q$ concave for some $q$, then $\hpr = E(\ell_{2})$ for every $1 \leq p < \infty$.
\end{proposition}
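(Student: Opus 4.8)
The plan is to mimic the computation in Proposition~\ref{brandenburgertor}, replacing the scalar Khinchin inequality with its Banach-lattice counterpart. Recall that for a $q$-concave Banach lattice $E$ there is a two-sided estimate (Khinchin's inequality for Banach lattices, see e.g. \cite[Theorem~16.18]{DiJaTo95} or \cite[Theorem~1.d.6]{LiTz79})
\[
\bigg( \int_{0}^{1} \Big\Vert \sum_{n} r_{n}(t) x_{n} \Big\Vert_{E}^{p} \, dt \bigg)^{\frac{1}{p}}
\sim_{p,E}
\bigg\Vert \Big( \sum_{n} \vert x_{n} \vert^{2} \Big)^{\frac{1}{2}} \bigg\Vert_{E}
\]
valid for all $1 \le p < \infty$ and every finite choice $x_1, \dots, x_N \in E$, where the constants depend only on $p$ and on the concavity constant of $E$. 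The point is that $q$-concavity guarantees finite cotype, so Kahane's inequality still lets us pass freely between the $L_1$- and $L_p$-averages of Rademacher sums, and then the lattice Khinchin inequality turns the Rademacher average into the square-function norm.

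First I would fix $N \in \mathbb{N}$ and a Dirichlet polynomial $\sum_{n=1}^{N} a_n n^{-s}$ with $a_n \in E$, and unravel the definitions exactly as in Proposition~\ref{brandenburgertor}: by Kahane's inequality the $\hpr$-norm is equivalent to the $L_p([0,1];\hp)$-norm of $\sum r_n a_n n^{-s}$, which by the definition of $\hp$ (as the image of $H_p(\mathbb{T}^{\mathbb{N}};E)$) equals
\[
\bigg( \int_{0}^{1} \int_{\mathbb{T}^{\mathbb{N}}} \Big\Vert \sum_{\alpha} r_{p^{\alpha}}(t) \, a_{p^{\alpha}} z^{\alpha} \Big\Vert_{E}^{p} \, dz \, dt \bigg)^{\frac{1}{p}}.
\]
Now apply Fubini to interchange the two integrals, and for each fixed $z \in \mathbb{T}^{\mathbb{N}}$ apply the lattice Khinchin inequality to the vectors $x_\alpha = a_{p^{\alpha}} z^{\alpha} \in E$ (a finite family, since the polynomial has length $N$). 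This replaces the inner Rademacher average by $\big\Vert (\sum_\alpha \vert a_{p^{\alpha}} z^{\alpha}\vert^2)^{1/2}\big\Vert_E$. Since $\vert z^{\alpha} \vert = 1$ pointwise on $\mathbb{T}^{\mathbb{N}}$ and the lattice operations $\vert x_\alpha \vert = \vert a_{p^{\alpha}}\vert$ are unaffected by multiplying by a unimodular scalar, the square function is independent of $z$, so the $z$-integral disappears and we are left with $\big\Vert (\sum_{n=1}^{N} \vert a_n \vert^2)^{1/2}\big\Vert_E$, i.e. the $\widetilde{E(\ell_2)}$-norm of the (finite) sequence $(a_n)_{n=1}^N$. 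Thus the $\hpr$-norm and the $\widetilde{E(\ell_2)}$-norm are equivalent on Dirichlet polynomials.

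To conclude $\hpr = E(\ell_2)$ as sets (not merely up to equivalence of norms on a dense subspace) I would argue by density and completeness. By Proposition~\ref{unterdenlinden}, every element of $\hpr$ is the $\hpr$-limit of its partial sums, so Dirichlet polynomials are dense in $\hpr$; and by definition $E(\ell_2)$ is the closure of the finite sequences in $\widetilde{E(\ell_2)}$. Since the two norms are equivalent on this common dense set of "finite objects," and both $\hpr$ (Proposition~\ref{potsdamer}) and $E(\ell_2)$ are complete, the identity map extends to an isomorphism between the two completions, which are exactly $\hpr$ and $E(\ell_2)$; the identification is the obvious one $\sum a_n n^{-s} \leftrightarrow (a_n)_n$.

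The main obstacle I anticipate is purely a matter of care rather than depth: one must make sure the lattice Khinchin inequality is applied only to \emph{finite} families (hence the reduction to polynomials first), and one must check that the absolute-value/square-function expressions are genuinely well defined in $E$ and genuinely $z$-independent — this uses that $\vert \lambda x \vert = \vert x \vert$ in a Banach lattice whenever $\vert \lambda \vert = 1$. A secondary point is to be explicit that $q$-concavity is invoked twice: once (via finite cotype) to justify Kahane's passage between $p=1$ and general $p$, and once for the lattice Khinchin estimate itself; both are standard consequences of $E$ being $q$-concave for some $q<\infty$.
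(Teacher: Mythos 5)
Your proposal is correct and follows essentially the same route as the paper: reduce to Dirichlet polynomials, use Kahane's inequality to pass between the $L_1$- and $L_p$-averages of the Rademacher sum, apply Fubini, and for each fixed $z\in\mathbb{T}^{\mathbb{N}}$ invoke the lattice Khinchin inequality for $q$-concave lattices (\cite[Theorem~1.d.6]{LiTz79}) together with $|z^\alpha|=1$ to obtain the square-function norm $\big\Vert(\sum_n |a_n|^2)^{1/2}\big\Vert_E$, and finally conclude by density of polynomials (Proposition~\ref{unterdenlinden}) and completeness. Your closing density-and-completeness argument just makes explicit what the paper compresses into its final sentence.
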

\begin{proof}
Let us first consider $a_{1}, \ldots , a_{N} \in E$. By the very definition of the norms in $\hpr$ and $\hp$ and
Kahane's inequality (that we apply twice) we have (with constants independent of $N$)
\begin{multline*}
\Big\Vert \sum_{n=1}^{N} a_{n} n^{-s} \Big\Vert_{\hpr}
= \int_{0}^{1} \Big\Vert \sum_{n=1}^{N} r_{n} (t) a_{n} n^{-s} \Big\Vert_{\hp} dt
\sim  \bigg( \int_{0}^{1} \Big\Vert \sum_{n=1}^{N} r_{n} (t) a_{n} n^{-s} \Big\Vert^{p}_{\hp} dt\bigg)^{\frac{1}{p}} \\
= \bigg( \int_{0}^{1} \int_{\mathbb{T}^{N}} \Big\Vert \sum_{\alpha} r_{p^{\alpha}} (t) a_{p^{\alpha}} z^{\alpha} \Big\Vert_{E}^{p} dz   dt \bigg)^{\frac{1}{p}}
=  \bigg( \int_{\mathbb{T}^{N}} \int_{0}^{1} \Big\Vert \sum_{\alpha} r_{p^{\alpha}} (t) a_{p^{\alpha}} z^{\alpha} \Big\Vert_{E}^{p}    dt dz\bigg)^{\frac{1}{p}} \\
\sim \bigg(\int_{\mathbb{T}^{N}} \Big( \int_{0}^{1}  \Big\Vert \sum_{\alpha} r_{p^{\alpha}} (t) a_{p^{\alpha}} z^{\alpha} \Big\Vert_{E}  dt\Big)^{p} dz \bigg)^{\frac{1}{p}} \,.
\end{multline*}
But now, since $E$ is $q$ concave for some $q$, for each fixed $z \in \mathbb{T}^{N}$ we have by \cite[Theorem~1.d.6]{LiTz79}
\[
\int_{0}^{1}  \Big\Vert \sum_{\alpha} r_{p^{\alpha}} (t) a_{p^{\alpha}} z^{\alpha} \Big\Vert_{E}  dt
\sim \Big\Vert \Big( \sum_{\alpha} |a_{p^{\alpha}} z^{\alpha}|^2 \Big)^{\frac{1}{2}} \Big\Vert_{E}
= \Big\Vert \Big( \sum_{n=1}^{N}  |a_{n} |^2 \Big)^{\frac{1}{2}} \Big\Vert_{E} \,.
\]
This, together with Proposition~\ref{unterdenlinden}, yields the conclusion.
\end{proof}

\section{A reformulation of $\boldsymbol{\spr}$}

\noindent Maurizi and Queff\'elec showed in \cite[Theorem~2.4]{MaQu10} how $S$ can be characterized in terms of bounds of the norm of the partial sums. A  modification of their argument using \cite{DeGaMaSe14} gives the following vector-valued version: For every $1\leq p \leq \infty$ and every Banach space $E$
\begin{equation} \label{leipziger}
S_{p}(E) = \inf \Big\{ \sigma >0 \, \big|\,\exists c_{\sigma}\,
\forall
D=  \sum_{n=1}^{N} a_{n} n^{-s} \in \hp
: \,\sum_{n=1}^{N} \Vert a_{n} \Vert \leq c_{\sigma} N^{\sigma} \Vert D \Vert_{\hp}  \Big\}\,.
\end{equation}
 The original proof of \cite[Theorem~2.4]{MaQu10} for $E = \mathbb{C}$ uses two key tools. The proof of one inequality is based on a closed-graph argument using the fact that $\mathcal{H}_p$ is Banach, and the proof of the converse inequality  relies on  \eqref{gendarmenmarkt}.
The results from the preceding section prepare  us well to establish the following  analogue of \eqref{leipziger} within our  setting.
\begin{proposition} \label{oranienburg} For every $1\leq p \leq \infty$ and Banach space $E$ we have
\begin{equation} \label{tegel}
\spr = \inf \Big\{ \sigma >0\,\, \big|\,\, \exists c_{\sigma} \,
\forall
D=  \sum_{n=1}^{N} a_{n} n^{-s} \in \hpr
: \,\sum_{n=1}^{N} \Vert a_{n} \Vert \leq c_{\sigma} N^{\sigma} \Vert D \Vert_{\hpr}  \Big\} \,.
\end{equation}
\end{proposition}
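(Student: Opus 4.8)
The plan is to mimic the strategy behind the classical identity \eqref{leipziger}, splitting the proof of \eqref{tegel} into the two inequalities ``$\leq$'' and ``$\geq$''. Denote by $T$ the infimum on the right-hand side of \eqref{tegel}.

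First I would prove $\spr \leq T$. Fix $\sigma > T$, so there is a constant $c_\sigma$ with $\sum_{n=1}^N \Vert a_n\Vert \leq c_\sigma N^\sigma \Vert D\Vert_{\hpr}$ for every finite $D = \sum_{n=1}^N a_n n^{-s} \in \hpr$. Take any $D = \sum_n a_n n^{-s} \in \hpr$ and any $\tau > \sigma$; I want to show $\sum \frac{a_n}{n^\tau} n^{-s}$ converges absolutely, i.e. $\sum_n \frac{\Vert a_n\Vert}{n^\tau} < \infty$. Using Proposition~\ref{unterdenlinden}, the partial sums of $D$ are uniformly bounded in $\hpr$ by $\Vert D\Vert_{\hpr}$, so $\sum_{n=1}^N \Vert a_n\Vert \leq c_\sigma N^\sigma \Vert D\Vert_{\hpr}$ for \emph{every} $N$. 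Then a routine Abel-summation / dyadic-block argument (summing $\Vert a_n\Vert/n^\tau$ over blocks $2^k \leq n < 2^{k+1}$ and bounding the block sum by $c_\sigma 2^{(k+1)\sigma} \Vert D\Vert_{\hpr} / 2^{k\tau}$) gives convergence because $\tau > \sigma$. Hence $\srh(D) \leq \sigma$ for every such $D$, so $\spr \leq \sigma$, and letting $\sigma \downarrow T$ yields $\spr \leq T$.

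Next I would prove $T \leq \spr$. Fix $\sigma > \spr$. The key is a closed-graph / uniform-boundedness argument exploiting that $\hpr$ is a Banach space (Proposition~\ref{potsdamer}). For each $N$ consider the linear map $u_N \colon \hpr \to \ell_1^N(E)$ (or rather, one restricts to the finite-dimensional subspace of Dirichlet polynomials of length $N$, which is closed in $\hpr$ and hence Banach) sending $\sum_{n=1}^N a_n n^{-s} \mapsto (a_1,\dots,a_N)$; this is bounded with some norm $C_N$. One shows $\sup_N C_N/N^\sigma < \infty$: if not, there is a sequence $D^{(k)} = \sum_{n=1}^{N_k} a_n^{(k)} n^{-s}$ with $\Vert D^{(k)}\Vert_{\hpr} = 1$ but $\sum_n \Vert a_n^{(k)}\Vert \geq k N_k^\sigma$; translating each $D^{(k)}$ by a suitable power $n^{-\delta_k}$ and gluing the pieces into a single Dirichlet series (standard ``Bohr-type'' concatenation, as in \cite{MaQu10}) produces a $D$ with $\sigma_a(D) - \srh(D) \geq \sigma$, contradicting $\sigma > \spr$. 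This gives $\sum_{n=1}^N \Vert a_n\Vert \leq c_\sigma N^\sigma \Vert D\Vert_{\hpr}$ for all finite $D$, i.e. $T \leq \sigma$, and letting $\sigma \downarrow \spr$ gives $T \leq \spr$.

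The main obstacle is the concatenation step in the second inequality: one must carefully combine infinitely many finite Dirichlet polynomials $D^{(k)}$, each rescaled by $n \mapsto n^{s_k}$ with $s_k \to 0$ and supported on disjoint ranges of indices (using primes not occurring in earlier blocks, so that the supports in $\mathbb{N}_0^{(\mathbb{N})}$ are independent), in such a way that the resulting series lies in $\hpr$ with controlled norm while its abscissa of absolute convergence is pushed up by $\sigma$. Here the decisive simplification compared to the classical setting is that Proposition~\ref{unterdenlinden} replaces the delicate logarithmic estimate \eqref{gendarmenmarkt}: partial sums in $\hpr$ are automatically norm-decreasing, so no $\log N$ loss appears and the gluing is cleaner. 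I would also note that, since $\hpr$ is built from $\hp$ via $\Rad(\cdot)$ and the Rademacher averaging commutes with translation by $n^{-s}$, the norm behaves well under the required dilations, which makes the estimates go through essentially verbatim from the scalar case of \cite{MaQu10,DeGaMaSe14}.
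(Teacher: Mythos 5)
Your first inequality ($\spr \le T$) is essentially the paper's own argument: Proposition~\ref{unterdenlinden} gives the uniform partial-sum bound and then Abel summation (or your dyadic blocks) yields absolute convergence for every $\tau>\sigma$; only a small slip there: what your computation shows is $\sigma_a(D)\le\sigma$ for $D\in\hpr$ (not ``$\srh(D)\le\sigma$''), and one passes to arbitrary $D$ by a translation reducing to $\srh(D)\le 0$. The genuine gap is in the converse inequality $T\le\spr$. You replace a direct argument by a uniform-boundedness-by-contradiction scheme whose decisive step --- gluing the bad polynomials $D^{(k)}$ into a single series with $\sigma_a(D)-\srh(D)\ge\sigma$ --- you yourself call ``the main obstacle'' and never carry out; moreover the mechanism you propose for it would fail as stated. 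Re-supporting each $D^{(k)}$ on primes not occurring in earlier blocks does preserve the $\hpr$-norm (the distribution on the polytorus and the Rademacher average are unchanged), but it replaces the frequencies $n\le N_k$ by integers that are super-polynomially larger, so the hypothesis $\sum_{n\le N_k}\|a^{(k)}_n\|\ge k\,N_k^{\sigma}$ no longer produces any lower bound for the abscissa of absolute convergence of the glued series: such a lower bound requires comparing the coefficient mass of a block with the actual sizes of its frequencies, and that comparison is exactly what is destroyed by moving the block to large primes. If instead you keep the blocks on their original frequencies you must control the cancellation between overlapping blocks, which you do not address (this can in fact be done, using $\max_n\|a_n\|_E\le\|D\|_{\hpr}$, a rapidly growing subsequence of the $N_k$ and weights $c_k$ with $\sum_k c_k<\infty$, so that $D=\sum_k c_k D^{(k)}$ converges absolutely in $\hpr$ --- but none of this is in your text, and it is the heart of that direction).

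The larger point is that no gluing is needed and that your closed-graph idea is applied in the wrong place: using it on the truncation maps $u_N$ only yields constants $C_N$ with no uniformity in $N$, which is precisely what pushes you into the concatenation. The paper applies the closed graph theorem once, to the inclusion of the whole Banach space $\hpr$ (Proposition~\ref{potsdamer}) into the weighted space $\big\{(a_n)\subset E\colon \sum_n\|a_n\|\,n^{-\sigma}<\infty\big\}$. This map is everywhere defined because $\sigma>\spr$ and $\srh(D)\le 0$ for every $D\in\hpr$, hence bounded, so there is a single constant with $\sum_{n}\|a_n\|\,n^{-\sigma}\le c_\sigma\big\|\sum_n a_n n^{-s}\big\|_{\hpr}$ for all of $\hpr$; restricting to polynomials of length $N$ and using $\sum_{n\le N}\|a_n\|\le N^{\sigma}\sum_{n\le N}\|a_n\|\,n^{-\sigma}$ gives $T\le\sigma$ at once. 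You should either adopt this direct argument or supply the omitted concatenation in full detail; as it stands, the proposal does not prove $T\le\spr$.
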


\begin{proof}
To show one inequality, let us take $\sigma > \spr$. A closed-graph argument (here we need Proposition~\ref{potsdamer}) gives that there exists $c_{\sigma}>0$ such that
\[
\sum_{n=1}^{\infty} \Vert a_{n} \Vert \frac{1}{n^{\sigma}} \leq c_{\sigma} \big\Vert \textstyle \sum a_{n} n^{-s} \big\Vert_{\hpr}
\]
for every $\sum a_{n} n^{-s} \in \hpr$. Then, for given $a_{1}, \ldots a_{N} \in E$ we have
\[
\sum_{n=1}^{N} \Vert a_{n} \Vert \leq N^{\sigma}  \sum_{n=1}^{N} \frac{\Vert a_{n} \Vert}{n^{\sigma}}\le
c_{\sigma} N^{\sigma}  \big\Vert \textstyle \sum_{n=1}^{N} a_{n} n^{-s} \big\Vert_{\hpr} \,.
\]
Let us conversely fix now some $\sigma_{0}>0$ satisfying the inequality in Proposition~\ref{oranienburg}, and choose $\sum a_{n} n^{-s} \in \hpr$. By Abel's summation and Proposition~\ref{unterdenlinden} we have, for any $\sigma>\sigma_0$,
\begin{multline*}
\sum_{n=1}^{N} \Vert a_{n} \Vert \frac{1}{n^{\sigma}}
= \sum_{n=1}^{N} \Vert a_{n} \Vert \frac{1}{N^{\sigma}} + \sum_{n=1}^{N-1} \sum_{k=1}^{n} \Vert a_{k} \Vert \Big( \frac{1}{n^{\sigma}} - \frac{1}{(n+1)^{\sigma}} \Big) \\
\leq c_{\sigma_{0}} N^{\sigma_{0} - \sigma} \Vert \sum  a_{n} n^{-s} \Vert_{\hpr}
+ \sum_{n=1}^{N-1} c_{\sigma_{0}} n^{\sigma_{0}} \Vert \sum  a_{n} n^{-s} \Vert_{\hpr}\Big( \frac{1}{n^{\sigma}} - \frac{1}{(n+1)^{\sigma}} \Big) \,.
\end{multline*}
Standard computations following  \cite[Lemma~1.1]{BaCaQu06} finally give
\[
\sum_{n=1}^{N} \Vert a_{n} \Vert \frac{1}{n^{\sigma}}
\leq c_{\sigma_{0}} \Vert \sum  a_{n} n^{-s} \Vert_{\hpr} \Big( 1 + \sum_{n=1}^{\infty} \frac{\sigma}{n^{\sigma-\sigma_{0}+1}} \Big)\,.
\]
Hence $\spr \leq \sigma$ and, since $ \sigma$ was arbitrary, the proof is completed.
\end{proof}

As an immediate consequence we obtain the following

\begin{corollary}\label{friedrichstr}
For every $1\leq p \leq \infty$ and Banach space $E$ we have $\spr \leq S_{p}(E)$\,.
\end{corollary}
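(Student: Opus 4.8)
The plan is to deduce Corollary~\ref{friedrichstr} directly from the two reformulations~\eqref{leipziger} and~\eqref{tegel}, by comparing the two infima. Both quantities are expressed as infima over the same type of condition --- an inequality of the form $\sum_{n=1}^N \Vert a_n \Vert \leq c_\sigma N^\sigma \Vert D \Vert$ valid for all finite Dirichlet polynomials $D = \sum_{n=1}^N a_n n^{-s}$ --- the only difference being that in~\eqref{tegel} the norm on the right-hand side is $\Vert D \Vert_{\hpr}$ while in~\eqref{leipziger} it is $\Vert D \Vert_{\hp}$. So the whole matter reduces to a single norm comparison.

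First I would observe that every finite Dirichlet polynomial $D = \sum_{n=1}^N a_n n^{-s}$ lies in both $\hp$ and $\hpr$ (finitely many terms, so all the convergence requirements are trivially met), so the two reformulations are indeed comparing the same family of polynomials. The key step is then the pointwise norm estimate
\begin{equation} \label{normcomparison}
\Big\Vert \sum_{n=1}^{N} a_{n} n^{-s} \Big\Vert_{\hp} \leq \Big\Vert \sum_{n=1}^{N} a_{n} n^{-s} \Big\Vert_{\hpr} \,.
\end{equation}
This is immediate from the definition~\eqref{spree} of the $\hpr$-norm together with Jensen's inequality (or simply the triangle inequality in $L_1([0,1];\hp)$ applied in the reverse direction): for the right-hand side we have $\int_0^1 \Vert \sum_n r_n(t) a_n n^{-s} \Vert_{\hp}\, dt \geq \Vert \int_0^1 \sum_n r_n(t)\, a_n n^{-s}\, dt \Vert_{\hp}$; but since $\int_0^1 r_n(t)\, dt = 0$ for $n \geq 2$ while $r_1 \equiv 1$ on $[0,1)$ --- actually, to avoid this subtlety with $r_1$, one instead integrates against a fixed sign pattern and uses the Contraction Principle, exactly as in the proof of Proposition~\ref{unterdenlinden}: fixing any $t_0$ with $|r_n(t_0)| = 1$ for all relevant $n$ (e.g.\ $t_0 = 0$), the Contraction Principle gives $\Vert \sum_n r_n(t_0) a_n n^{-s}\Vert_{\hp} \leq \int_0^1 \Vert \sum_n r_n(t) a_n n^{-s}\Vert_{\hp}\, dt$, and the left-hand side equals $\Vert \sum_n a_n n^{-s}\Vert_{\hp}$ up to the harmless signs $r_n(t_0) = \pm 1$, which can be absorbed into the $a_n$.

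Granting~\eqref{normcomparison}, the argument concludes in one line: if $\sigma > 0$ satisfies the condition defining $S_p(E)$ in~\eqref{leipziger} with constant $c_\sigma$, then for any finite $D = \sum_{n=1}^N a_n n^{-s} \in \hpr$ we have $\sum_{n=1}^N \Vert a_n \Vert \leq c_\sigma N^\sigma \Vert D \Vert_{\hp} \leq c_\sigma N^\sigma \Vert D \Vert_{\hpr}$, so $\sigma$ also satisfies the condition defining $\spr$ in~\eqref{tegel}. Taking the infimum over all such $\sigma$ gives $\spr \leq S_p(E)$. The only point requiring any care --- the ``main obstacle,'' though it is very minor --- is the correct handling of the Rademacher function $r_1$ in establishing~\eqref{normcomparison}, which as noted is cleanly dispatched by invoking the Contraction Principle rather than naive integration; alternatively one can simply cite Proposition~\ref{unterdenlinden}'s proof verbatim, since it already contains precisely this estimate.
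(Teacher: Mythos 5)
Your reduction to the two reformulations \eqref{leipziger} and \eqref{tegel} is the right frame, but the step you call the ``key step'' --- the norm comparison $\Vert D\Vert_{\hp}\leq\Vert D\Vert_{\hpr}$ --- is false in general, and the contraction-principle argument you give for it is a misapplication. The Contraction Principle (DiJaTo, Theorem~12.2) compares the \emph{Rademacher averages} of two randomized sums, $\int_0^1\Vert\sum_n\lambda_n r_n(t)x_n\Vert\,dt\leq\int_0^1\Vert\sum_n r_n(t)x_n\Vert\,dt$ for $|\lambda_n|\leq 1$; it does \emph{not} allow you to dominate a single deterministic signed sum $\Vert\sum_n r_n(t_0)x_n\Vert$ by the average $\int_0^1\Vert\sum_n r_n(t)x_n\Vert\,dt$. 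That claim already fails for two scalars: with $x_1=x_2=1$ one has $|x_1+x_2|=2$ while $\int_0^1|r_1(t)+r_2(t)|\,dt=1$. Correspondingly, the norm inequality you need fails badly, e.g.\ for $p=\infty$, $E=\mathbb{C}$ and $a_n=1$ for $n\leq N$: then $\Vert\sum_{n\leq N}n^{-s}\Vert_{\mathcal{H}_\infty(\mathbb{C})}=N$, whereas by the Kahane--Salem--Zygmund inequality (this is exactly the mechanism behind \eqref{ok} in the proof of Theorem~\ref{main}) the average $\int_0^1\sup_{\re s>0}|\sum_{n\leq N}r_n(t)n^{-s}|\,dt$ is $o(N)$, so the ratio $\Vert D\Vert_{\mathcal{H}_\infty}/\Vert D\Vert_{\mathcal{H}_\infty^{\mathrm{rad}}}$ is unbounded and no constant can repair the comparison. (Also, under the standard convention $r_1$ is not $\equiv 1$; every Rademacher function has mean zero, so the Jensen route collapses to $0$ on the left-hand side.)

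The correct argument sidesteps any comparison of the two norms of $D$ by exploiting that the \emph{left-hand side} of the Sidon-type inequality is sign-invariant: if $\sigma$ satisfies the condition in \eqref{leipziger} with constant $c_\sigma$, apply that inequality for each fixed $t\in[0,1]$ to the Dirichlet polynomial with coefficients $r_n(t)a_n$, so that
\begin{equation*}
\sum_{n=1}^{N}\Vert a_n\Vert=\sum_{n=1}^{N}\Vert r_n(t)a_n\Vert\leq c_\sigma N^{\sigma}\Big\Vert\sum_{n=1}^{N}r_n(t)a_n n^{-s}\Big\Vert_{\hp}\,,
\end{equation*}
and then integrate in $t$; the right-hand side becomes $c_\sigma N^{\sigma}\Vert\sum_{n=1}^{N}a_n n^{-s}\Vert_{\hpr}$, and Proposition~\ref{oranienburg} gives $\spr\leq\sigma$. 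This is the paper's proof, and it is the same one-line device used later to prove \eqref{up}; your write-up needs this replacement for the false step, after which the rest of your outline goes through.
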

\begin{proof}
Let us take $\sigma >0$ satisfying the condition in \eqref{leipziger}. Then for every choice of finitely many $a_1, \ldots, a_N \in E$ and every $t \in [0,1]$ we have
\[
\sum_{n=1}^{N} \Vert a_{n} \Vert = \sum_{n=1}^{N} \Vert r_{n}(t) a_{n} \Vert
\leq c_{\sigma} N^{\sigma} \Big\Vert \sum_{n=1}^{N} r_{n}(t) a_{n} n^{-s} \Big\Vert_{\hp}  \,.
\]
Integration with respect to $t$ and Proposition~\ref{oranienburg} give the conclusion.
\end{proof}

\section{Uniform a.s.-sign convergence versus absolut covergence for  Hardy-type Dirichlet series}

The following theorem is our first main result.

\begin{theorem}\label{alexanderplatz1}
For every Banach space $E$ and $1 \leq p \leq \infty$ we have
\[
\spr = 1 - \frac{1}{\cot E} \,,
\]
i.e., if a Dirichlet series $\sum a_n n^{-s} \in \mathfrak{D}(E)$ is a.e-sign convergent in $\mathcal{H}_p(E)$, then
$\sum_n \|a_n\|_E \,n^{-\sigma} < \infty$ for $\sigma > \sigma_0 :=  1 - \frac{1}{\cot E}$, and  $\sigma_0 $ is best possible.
\end{theorem}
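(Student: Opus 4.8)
The plan is to establish the two inequalities $\spr \le 1 - \tfrac{1}{\cot E}$ and $\spr \ge 1 - \tfrac{1}{\cot E}$ separately, and in each case it is natural to split into the finite-dimensional and infinite-dimensional cases, since $\cot E$ behaves very differently: a finite-dimensional space has $\cot E = 2$ regardless of its dimension, while an infinite-dimensional space may have any $\cot E \in [2,\infty]$.

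For the upper bound I would work via the reformulation in Proposition~\ref{oranienburg}, so it suffices to bound $\sum_{n=1}^N \|a_n\|$ by $c_\sigma N^\sigma \|\sum_{n=1}^N a_n n^{-s}\|_{\hpr}$ for every $\sigma > 1 - \tfrac1{\cot E}$. The key point is that the $\hpr$-norm of a Dirichlet polynomial is, by definition, $\int_0^1 \|\sum r_n(t) a_n n^{-s}\|_{\hp}\,dt$, and by Kahane's inequality this is comparable to the $L_q$-average for any $q$; choosing $q$ slightly above $\cot E$ and using the definition of cotype together with the ($H_p$-valued) integral over $\TT^{\mathbb N}$, one should be able to pull a $\big(\sum_n \|a_n\|^q\big)^{1/q}$ out of the $\hpr$-norm, exactly as in the proof of the lower bound of $S_p(E)$ in \cite{CaDeSe14} but now on the simpler space $\hpr$. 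Combining $\big(\sum_{n\le N}\|a_n\|^q\big)^{1/q}$ with Hölder's inequality $\sum_{n\le N}\|a_n\| \le N^{1-1/q}\big(\sum_{n\le N}\|a_n\|^q\big)^{1/q}$ yields the exponent $1-1/q$, and letting $q \downarrow \cot E$ gives the bound. In the infinite-dimensional case where $\cot E = \infty$ the right-hand side of \eqref{alexanderplatz} is $1$, so the upper bound $\spr \le 1$ follows already from Corollary~\ref{friedrichstr} together with the classical fact $S_p(E) \le S_\infty(E) \le 1$; alternatively it follows from $\spr \le S_p(E) = 1 - 1/\cot E$ directly, so really the upper bound is essentially \emph{immediate} from Corollary~\ref{friedrichstr} and \eqref{pariserplatz}. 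Thus the genuine content is the \emph{lower} bound.

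For the lower bound I would again use Proposition~\ref{oranienburg}: it is enough to exhibit, for each $\sigma < 1 - \tfrac1{\cot E}$, Dirichlet polynomials $D = \sum_{n=1}^N a_n n^{-s}$ with $\sum_{n\le N}\|a_n\|$ large compared to $N^\sigma \|D\|_{\hpr}$. In the finite-dimensional case ($\cot E = 2$) one needs polynomials with $\sum\|a_n\| \gtrsim N^{1/2 - \varepsilon}\|D\|_{\hpr}$; the canonical construction is to take $m$-homogeneous polynomials associated, via \eqref{willybrandt}, to the Bohnenblust–Hille-type extremal $m$-linear forms, or equivalently Dirichlet polynomials supported on products of $m$ distinct primes from $\{p_1,\dots,p_k\}$ with Steinhaus/Rademacher-random $\pm1$ coefficients. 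For such a polynomial the $\hp$-norm is controlled by hypercontractivity / Khinchin–Kahane on $\TT^{\mathbb N}$, and because the coefficients are random signs the $\hpr$-norm is (up to constants depending only on $p$ and $m$) the same as the $\mathcal H_2$-norm $\big(\sum|a_n|^2\big)^{1/2}$ — this is exactly the phenomenon of Proposition~\ref{brandenburgertor} and Proposition~\ref{europaplatz}. One then optimizes over $m$ and $k$ exactly as in the known proof of $S_p = 1/2$ to push the exponent up to $1/2$. In the infinite-dimensional case one uses that $E$ contains, for every $q < \cot E$, finite-dimensional subspaces that are uniformly far from having cotype $q$ (via Maurey–Pisier, $E$ finitely factors $\ell_q^n$'s uniformly), and transplants the scalar construction into those subspaces using near-isometric copies of $\ell_q^n$; this produces polynomials realizing exponents approaching $1 - 1/\cot E$.

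The main obstacle I expect is the lower-bound construction, and within it the passage from the scalar extremal examples to the vector-valued ones in the infinite-dimensional regime: one must simultaneously control the $\hpr$-norm (which, being a Rademacher average of an $\hp$-norm, is relatively forgiving and close to an $\ell_2$-type quantity) and keep $\sum\|a_n\|$ as large as the geometry of $E$ permits, which forces a careful choice of how many primes $k$ and which homogeneity degree $m$ to use, coupled with the Maurey–Pisier machinery to locate the right finite-dimensional pieces of $E$. By contrast, verifying that the constructed polynomials lie in $\hpr$ with the claimed norm bounds, and the final optimization over $m,k$, should be routine modifications of arguments already in \cite{BoHi31}, \cite{Ha39} and \cite{CaDeSe14}.
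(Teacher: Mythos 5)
Your overall architecture matches the paper's: the upper bound is indeed immediate from Corollary~\ref{friedrichstr} together with \eqref{pariserplatz} (your preliminary cotype/Kahane computation is superfluous), and the genuine content is the lower bound, split into finite- and infinite-dimensional cases. In the finite-dimensional case your plan (random-sign $m$-homogeneous Dirichlet polynomials, Kahane--Salem--Zygmund-type control of the sup norm, Proposition~\ref{brandenburgertor} for $p<\infty$, then optimization over $m$) is essentially the paper's proof of Proposition~\ref{17juni}; the paper even simplifies further for $1\le p<\infty$ by taking the $1$-homogeneous all-ones polynomial over the primes and invoking the prime number theorem, and for $p=\infty$ it takes deterministic all-ones coefficients, since the sign randomness is already built into the $\mathcal{H}_\infty^{\mathrm{rad}}$-norm and KSZ applies directly to that norm. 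Where you genuinely diverge is the infinite-dimensional lower bound: you propose a Maurey--Pisier transplantation of scalar extremal examples into near-isometric copies of $\ell_q^n\subset E$, and you flag the simultaneous control of the $\hpr$-norm as the main obstacle. The paper instead makes a one-line observation that dissolves this obstacle: for $1$-homogeneous Dirichlet polynomials the signs $r_{p_k}(t)$ are absorbed by the supremum over $\mathbb{T}^N$ (rotation invariance), so $\Vert\sum_k a_{p_k}p_k^{-s}\Vert_{\mathcal{H}_\infty^{\mathrm{rad}}(E)}=\Vert\sum_k a_{p_k}p_k^{-s}\Vert_{\mathcal{H}_\infty(E)}$, hence $S_{\infty,1}^{\mathrm{rad}}(E)=S_{\infty,1}(E)$, and the known identity $S_{\infty,1}(E)=1-\frac{1}{\cot E}$ from \cite{DeGaMaPG08} (whose proof is where the cotype geometry you describe actually lives) finishes the argument, using also $\mathcal{H}_\infty^{\mathrm{rad}}(E)\subset\hpr$ to pass to general $p$. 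Your route would work, but it amounts to reproving the cited result; the paper's reduction is cleaner and is the real new idea here.

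One inaccuracy you should repair: your claim that for random-sign coefficients the $\hpr$-norm is comparable to the $\ell_2$-norm of the coefficients ``up to constants depending only on $p$ and $m$'' is false for $p=\infty$. If it were true, each fixed homogeneity $m$ would already give the exponent $\tfrac12$, contradicting Proposition~\ref{17juni}, which gives exactly $\tfrac{m-1}{2m}$ for $p=\infty$; the Kahane--Salem--Zygmund bound carries an extra factor of order $\sqrt{N\log m}$ ($N$ the number of variables), which is what produces $\tfrac{m-1}{2m}$ and forces the limit $m\to\infty$. Since you do propose optimizing over $m$, your construction survives once this is stated correctly, but as written the intermediate claim is wrong.
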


\vspace{3mm}

\noindent Note that this,  in combination with \eqref{pariserplatz}, in particular shows that for each Banach space $E$ and each $1 \leq p \leq \infty$  we have
 \[
 \spr =S_p(E).
 \]
In view of Corollary~\ref{friedrichstr} and \eqref{pariserplatz},
we only have to take care of the lower estimate.

\bigskip
In order to prove  Theorem~\ref{alexanderplatz1}, we need the concept of $m$-homogeneous Dirichlet series (that was first suggested in \cite{BoHi31} and whose set we denote by $\mathfrak{D}_{m}(E)$): Those $\sum a_{n} n^{-s}$ for which $a_{n} \neq 0$ only if $n$ has exactly $m$ prime divisors (counted with multiplicity, we denote this by $\Omega(n)=m$). Then
we define $$\mathcal{H}_{p,m}(E) \,\,\,\text{ and }\,\,\,\hprm\,,$$
 to be the (closed) subspace of  $\mathcal{H}_{p}(E)$ and $\hpr$, respectively,  consisting of $m$-homogeneous Dirichlet series.
 It is well-known that for all $1 \leq p,q < \infty$  and $m$ (see e.g. \cite[Theorem~9.1]{CoGa86} or~ \cite{Ba02})
\begin{align} \label{pm=qm}
 \mathcal{H}_{p,m}(\mathbb{C})=\mathcal{H}_{q,m}(\mathbb{C})\,.
 \end{align}
 We now can repeat the above program and define for every $m \in \mathbb{N}$,
every $1 \leq p \leq \infty$ and every Banach space $E$
\begin{gather*}
S_{p,m} (E)
:=  \sup_{D \in \mathfrak{D}(E)\text{ $m$-hom.}} \sigma_{a} (D) - \sigma_{\mathcal{H}_p} (D) \\
S_{p,m}^{\mathrm{rad}} (E)
:= \sup_{D \in \mathfrak{D}(E)\text{ $m$-hom.}}  \sigma_{a} (D) - \srh (D) ;
\end{gather*}
 obviously $S_{p,m} (E) \leq S_p(E)$ and $S_{p,m}^{\mathrm{rad}} (E) \leq S_{p}^{\mathrm{rad}} (E)$. Exactly as above (see the proof of Proposition~\ref{oranienburg}), we may show that
\begin{align} \label{tegel1}
S_{p,m} (E) = \inf \Big\{ \sigma >0\,\, \big|\,\, \exists c_{\sigma} \,
\forall
D=  \sum_{n=1}^{N} a_{n} n^{-s} \in \mathcal{H}_{p,m}(E): \,  \sum_{n=1}^{N} \|a_{n}\| \leq  c_{\sigma} N^{\sigma} \|D \|_{\mathcal{H}_{p,m}(E)}  \Big\} \,
\end{align}
and
\begin{align} \label{tegel2}
S_{p,m}^{\mathrm{rad}} (E) = \inf \Big\{ \sigma >0\,\, \big|\,\, \exists c_{\sigma} \,
\forall
D=  \sum_{n=1}^{N} a_{n} n^{-s} \in \hprm: \,  \sum_{n=1}^{N} \|a_{n}\| \leq c_{\sigma} N^{\sigma} \|D \|_{\hprm}  \Big\}\,.
\end{align}
Moreover, following the argument for Corollary~\ref{friedrichstr} we have
\begin{equation} \label{kanzleramt}
S_{p,m}^{\mathrm{rad}} (E) \leq S_{p,m} (E)\,.
\end{equation}

 As a by product of our proof (see the end of Subsection~\ref{end}) we are going to obtain the following result (for the analogue  for finite dimensional spaces
 see Proposition ~\ref{17juni}):
  \begin{proposition} \label{contrast} For every infinite dimensional Banach space $E$ and every $m$
\[
S_{p,m}^{\mathrm{rad}} (E)  =S_{p,m}(E) = 1 - \frac{1}{\cot E} \,.
\]
\end{proposition}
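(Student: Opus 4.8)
The inequality $S_{p,m}^{\mathrm{rad}}(E) \le S_{p,m}(E)$ is already recorded in \eqref{kanzleramt}, and we also know $S_{p,m}(E) \le S_p(E) = 1 - \frac{1}{\cot E}$ from \eqref{pariserplatz}. Thus the whole content of the statement is the lower bound
\[
S_{p,m}^{\mathrm{rad}}(E) \ge 1 - \frac{1}{\cot E}
\]
for every infinite dimensional $E$ and every (large) $m$. The plan is to use the reformulation \eqref{tegel2}: we must, for $\sigma$ slightly below $1 - \frac{1}{\cot E}$, produce $m$-homogeneous Dirichlet polynomials $D = \sum_{n=1}^N a_n n^{-s} \in \hprm$ with $\sum_{n=1}^N \|a_n\| $ large compared to $N^{\sigma}\|D\|_{\hprm}$, i.e. the constants $c_\sigma$ must blow up. Since the norm of $\hprm$ is a Rademacher average of $\mathcal{H}_{p,m}(E)$-norms, and by \eqref{pm=qm} (vector-valued version via Kahane, exactly as in the proof of Proposition~\ref{europaplatz}) the exponent $p$ is essentially irrelevant in the $m$-homogeneous scalar core, the natural route is to reduce to $p$ large and compare with the scalar Bohnenblust--Hille-type polynomials.

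The key step is the following reduction: given an infinite dimensional $E$, fix $q_0$ just below $\cot E$, so that $E$ fails to have cotype $q_0$. Then for each $n$ one can find (by definition of cotype failure, using a gliding-hump / finite-dimensional-subspace argument) vectors $x_1, \dots, x_M \in E$ with $\sum_k \|x_k\|$ large relative to $\big(\int_{\mathbb{T}^M}\|\sum_k x_k z_k\|^2 dz\big)^{1/2}$. These are to be combined with the classical scalar $m$-homogeneous polynomials with small sup-norm but $\ell_1$-norm of coefficients of order $N^{1 - 1/q}$ (the Bohnenblust--Hille / Kahane--Salem--Zygmund homogeneous polynomials used in \cite{BoHi31, DeFrOrOuSe11}); lifting these to Dirichlet series in $\mathcal{H}_{\infty,m}$ and tensoring with the vectors $x_k$ one obtains $m$-homogeneous $E$-valued Dirichlet polynomials whose $\hprm$-norm is controlled — here one uses that the Rademacher average over signs of the coefficients, composed with the $\mathbb{T}^{\mathbb{N}}$-average, again reduces to an $L_2$ (Gaussian/Rademacher) estimate on $E$, which is exactly where the cotype constant of $E$ enters — while $\sum \|a_n\|$ stays of the order $N^{1 - 1/q_0}$. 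Letting $q_0 \uparrow \cot E$ gives the claimed lower bound. The point of restricting to infinite dimensional $E$ is precisely that one can realize the cotype-$q_0$ failure on arbitrarily large but finite blocks, which is what allows the construction to be carried out for every fixed homogeneity degree $m$ once $m$ is large enough relative to how well $E$ approximates the failure.

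The main obstacle I expect is bookkeeping the interaction between the \emph{two} independent averages: the sign average defining $\hprm$ and the $\mathbb{T}^{\mathbb{N}}$-average defining $\mathcal{H}_{p,m}$. One must show that, for the particular tensorized polynomials chosen, $\|D\|_{\hprm} \sim \big(\sum_\alpha \|a_{p^\alpha}\|_?^2\big)^{1/2}$-type quantity that is small, which requires combining Kahane's inequality (to pass from $L_1$ to $L_p$ in the sign variable), Fubini, and a cotype/concavity estimate on $E$ applied fibrewise in $z \in \mathbb{T}^{\mathbb{N}}$ — essentially the same mechanism as in the proof of Proposition~\ref{europaplatz}, but now one needs a matching \emph{lower} estimate for $\sum\|a_n\|$, so the vectors $x_k$ must be chosen with norms bounded below as well. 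A secondary subtlety is that $E$ need not be a lattice, so the "$E(\ell_2)$" picture is unavailable and one must argue directly with Rademacher averages and the abstract cotype inequality; this is standard but must be done carefully to keep all constants independent of $N$ (and dependent on $m$ only through absolute constants), so that \eqref{tegel2} indeed forces $S_{p,m}^{\mathrm{rad}}(E) \ge 1 - 1/q_0$.
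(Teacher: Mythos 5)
Your reduction of the statement to the lower bound via \eqref{kanzleramt}, \eqref{pariserplatz} and \eqref{tegel2} is fine, but the construction you propose has a genuine gap. The engine you want to run on --- ``$E$ fails cotype $q_0$, so a gliding-hump argument produces $x_1,\dots,x_M\in E$ with $\sum_k\|x_k\|$ large compared to $\bigl(\int_{\mathbb{T}^M}\|\sum_k x_kz_k\|^2dz\bigr)^{1/2}$'' --- only yields information about \emph{averages} over the torus, whereas for $p=\infty$ the denominator in \eqref{tegel2} is a \emph{sup} over $\mathbb{T}^{\mathbb{N}}$, averaged only in the sign variable. And $p=\infty$ cannot be avoided: since $\|D\|_{\mathcal{H}_p(E)}\le\|D\|_{\mathcal{H}_\infty(E)}$, one has $S_{\infty,m}^{\mathrm{rad}}(E)\le S_{p,m}^{\mathrm{rad}}(E)$, so lower bounds for large finite $p$ say nothing about $p=\infty$; your ``reduce to $p$ large'' goes in the wrong direction (the efficient reduction is the opposite one: prove the case $p=\infty$ and use $\mathcal{H}_{\infty}^{\mathrm{rad}}(E)\subset\hpr$). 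Moreover, there is no vector-valued Kahane--Salem--Zygmund phenomenon in an arbitrary infinite-dimensional $E$ that converts $L_2$-average control of $\sum_k\varepsilon_kx_kz_k$ into sup-norm control (in $\ell_1^M$ the sup equals $M$ for every choice of signs while the averaged quantity is $\sqrt M$), so the ``cotype failure $+$ scalar KSZ tensor'' bookkeeping cannot deliver the needed estimate. The known way to pass from $\cot E$ to sup-norm information is the Maurey--Pisier theorem ($\ell_{\cot E}$ is finitely representable in $E$), i.e.\ one works inside almost isometric copies of $\ell_q^N$ where $\sup_{z\in\mathbb{T}^N}\|\sum_k z_ke_k\|=N^{1/q}$ exactly; this is the input used in \cite{DeGaMaPG08} and is a substantially stronger tool than the mere definition of cotype failure. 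Finally, your plan only claims the bound ``once $m$ is large enough'', while the proposition asserts it for every $m$, including $m=1$.

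The tensoring with Bohnenblust--Hille/KSZ polynomials is also unnecessary: since the answer $1-\frac{1}{\cot E}$ does not improve with the degree of homogeneity, the extremal examples are essentially $1$-homogeneous. The paper's argument is much shorter: by the inclusion above it suffices to bound $S_{\infty,m}^{\mathrm{rad}}(E)$ from below; for a $1$-homogeneous Dirichlet polynomial (or one of the form corresponding to $z_1^{m-1}\sum_k a_{p_k}z_k$, which is $m$-homogeneous and changes only constants) the random signs can be absorbed into rotations of the torus variables, so that for every $t$
\[
\Big\Vert \sum_{k} r_{p_k}(t)\,a_{p_k}\,p_k^{-s}\Big\Vert_{\mathcal{H}_{\infty}(E)}
=\Big\Vert \sum_{k} a_{p_k}\,p_k^{-s}\Big\Vert_{\mathcal{H}_{\infty}(E)}\,,
\]
hence the $\mathcal{H}_{\infty}^{\mathrm{rad}}$- and $\mathcal{H}_{\infty}$-norms coincide on such series and $S_{\infty,1}^{\mathrm{rad}}(E)=S_{\infty,1}(E)=1-\frac{1}{\cot E}$ by \cite[p.~554]{DeGaMaPG08}. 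This observation --- that the randomization is invisible on (shifted) $1$-homogeneous series in the $\mathcal{H}_\infty$-norm, so the known non-randomized vector-valued result applies verbatim --- is the key point, and it is missing from your proposal.
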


\smallskip
We divide the proof of Theorem~\ref{alexanderplatz1} into two separate cases: for finite and infinite dimensional spaces.

\subsection{The finite dimensional case} \label{end}
For every finite dimensional Banach space $E$ we have $\cot E = 2$. Then the following counterpart of \eqref{contrast}   obviously implies the lower bound in Theorem~\ref{alexanderplatz1}.

\begin{proposition} \label{17juni} For every finite dimensional Banach space $E$ and every $m$
\[
S_{p,m}^{\mathrm{rad}} (E)  = S_{p,m} (E) =
\begin{cases}
\frac{1}{2} & \text{ for } 1 \leq p < \infty \\
\frac{m-1}{2m} & \text{ for } p = \infty\,.
\end{cases}
\]
\end{proposition}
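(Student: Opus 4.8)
The plan is to prove the two-sided estimate by reducing, as far as possible, to the scalar case $E = \mathbb{C}$ and then invoking the known value of $S_{\infty,m}(\mathbb{C})$ from Bohnenblust--Hille. First, since $E$ is finite dimensional it embeds isomorphically (with constants depending only on $\dim E$, hence irrelevant for abscissas) into some $\ell_\infty^d$, and conversely $\mathbb{C}$ sits isometrically inside $E$; both $S_{p,m}$ and $S_{p,m}^{\mathrm{rad}}$ are monotone under such embeddings because the defining inequalities in \eqref{tegel1}--\eqref{tegel2} only involve norms of coefficients and of the Dirichlet polynomial. Thus it suffices to compute the quantities for $E = \mathbb{C}$ (the lower bound comes from $\mathbb{C} \hookrightarrow E$, the upper bound from $E \hookrightarrow \ell_\infty^d$ and the fact that for $\ell_\infty^d$ a Dirichlet polynomial is just a $d$-tuple of scalar ones). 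So the heart of the matter is the chain
\[
S_{p,m}^{\mathrm{rad}}(\mathbb{C}) \;\le\; S_{p,m}(\mathbb{C}) \;=\;
\begin{cases} \tfrac12 & 1 \le p < \infty\\[1mm] \tfrac{m-1}{2m} & p = \infty\end{cases}
\]
together with the reverse inequality $S_{p,m}^{\mathrm{rad}}(\mathbb{C}) \ge S_{p,m}(\mathbb{C})$, which is the only place where real work is needed since Corollary-type monotonicity \eqref{kanzleramt} already gives $\le$.

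For the scalar value of $S_{p,m}(\mathbb{C})$ I would argue as follows. By \eqref{pm=qm} the spaces $\mathcal{H}_{p,m}(\mathbb{C})$ all coincide for $1 \le p < \infty$, with norm equivalent to the $\mathcal{H}_{2,m}(\mathbb{C})$-norm, i.e. to $(\sum_{\Omega(n)=n}|a_n|^2)^{1/2}$; so for finite $p$ the inner supremum in \eqref{tegel1} amounts to asking for the best $\sigma$ with $\sum_{\Omega(n)=m,\,n\le N}|a_n| \le c\,N^\sigma (\sum |a_n|^2)^{1/2}$, and Cauchy--Schwarz plus counting the number of $n \le N$ with $\Omega(n)=m$ (which is $\sim N(\log\log N)^{m-1}/(m-1)!$, subpolynomial) gives $S_{p,m}(\mathbb{C}) = 1/2$ for every $m \ge 1$ and $1 \le p < \infty$. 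For $p = \infty$ the value $\frac{m-1}{2m}$ is exactly the Bohnenblust--Hille theorem on the convergence abscissa of $m$-homogeneous Dirichlet series (via the polynomial Bohnenblust--Hille inequality on the torus $\mathbb{T}^\infty$); I would cite \cite{BoHi31} (and \cite{QuQu13}) for this rather than reprove it.

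The main obstacle is the reverse inequality $S_{p,m}^{\mathrm{rad}}(\mathbb{C}) \ge S_{p,m}(\mathbb{C})$: one must transfer lower-bound examples from $\mathcal{H}_{p,m}$ to $\mathcal{H}_{p,m}^{\mathrm{rad}}$. The key observation is that for a scalar $m$-homogeneous Dirichlet polynomial, randomizing the coefficients by signs $\varepsilon_n$ does not change the $\mathcal{H}_{p,m}^{\mathrm{rad}}$-norm by more than a constant: by the already-established Proposition~\ref{brandenburgertor}, $\mathcal{H}_{p}^{\mathrm{rad}}(\mathbb{C}) = \mathcal{H}_2$ isomorphically, hence $\|\sum_{\Omega(n)=m} a_n n^{-s}\|_{\mathcal{H}_{p,m}^{\mathrm{rad}}(\mathbb{C})} \sim (\sum |a_n|^2)^{1/2}$, which by \eqref{pm=qm} is also $\sim \|\sum a_n n^{-s}\|_{\mathcal{H}_{p,m}(\mathbb{C})}$ for finite $p$, giving $S_{p,m}^{\mathrm{rad}}(\mathbb{C}) = S_{p,m}(\mathbb{C}) = 1/2$ for $1 \le p < \infty$ at once. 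For $p = \infty$ the situation is subtler, since $\mathcal{H}_{\infty,m}^{\mathrm{rad}}(\mathbb{C})$ need not equal $\mathcal{H}_{\infty,m}(\mathbb{C})$; here I would take an extremal (or near-extremal) Bohnenblust--Hille polynomial $\sum a_n n^{-s}$ witnessing $S_{\infty,m}(\mathbb{C}) = \frac{m-1}{2m}$ and argue that its $\mathcal{H}_{\infty,m}^{\mathrm{rad}}$-norm is still controlled: by Kahane's inequality and the fact that for an $m$-homogeneous polynomial on $\mathbb{T}^\infty$ the sup-norm and the $L_{2m}$-norm (or any $L_q$, $q<\infty$) are polynomially related only through a factor depending on the degree but a constant in the number of variables is what one exploits — more precisely, one uses that the relevant extremal examples already have $\|\cdot\|_{\mathcal{H}_\infty}$ comparable to the $\mathcal{H}_2$-norm times the expected power of $N$, and that $\|\sum r_n a_n n^{-s}\|_{\mathcal{H}_\infty}$ can be estimated from above using the contraction principle together with hypercontractivity (Bayart's estimates, or the polynomial BH inequality in its probabilistic form). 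This last estimate — controlling $\mathbb{E}\,\|\sum \varepsilon_n a_n n^{-s}\|_{\mathcal{H}_\infty}$ from above by $N^{(m-1)/2m}(\sum|a_n|^2)^{1/2}$ up to constants — is the genuinely delicate point, and is where I expect the bulk of the work (and most likely an appeal to a known hypercontractive or $L_p$-versus-$L_\infty$ estimate for homogeneous polynomials) to go.
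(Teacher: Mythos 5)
Your reduction to the scalar case and your treatment of $1\le p<\infty$ (via \eqref{pm=qm} and Proposition~\ref{brandenburgertor}, Cauchy--Schwarz, and a counting example) are sound and essentially parallel to the paper's argument, as is the coordinatewise transfer of the upper bounds through $E\hookrightarrow\ell_\infty^d$ (the paper instead works directly with $\ell_2^k$, using Kahane's inequality for $p<\infty$ and the vector-valued Bohnenblust--Hille inequality for $p=\infty$; both routes are fine). The genuine gap is exactly the point you flag and defer: the lower bound $S^{\mathrm{rad}}_{\infty,m}(\mathbb{C})\ge\frac{m-1}{2m}$. What is needed there is an \emph{upper} bound for the expected sup-norm of a random-sign $m$-homogeneous polynomial in terms of the $\ell_2$-norm of its coefficients and the number of variables, and the tools you invoke (hypercontractivity, the polynomial Bohnenblust--Hille inequality ``in probabilistic form'', contraction) go in the wrong direction: they bound coefficient norms by sup-norms and give no control of $\mathbb{E}_\varepsilon\big\|\sum_n\varepsilon_n a_n n^{-s}\big\|_{\mathcal{H}_\infty}$ from above. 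Moreover, the target estimate you state, $\mathbb{E}\big\|\sum\varepsilon_n a_n n^{-s}\big\|_{\mathcal{H}_\infty}\lesssim N^{\frac{m-1}{2m}}\big(\sum|a_n|^2\big)^{1/2}$, has the wrong shape: if $N$ denotes the length of the polynomial it is too weak to produce the exponent $\frac{m-1}{2m}$ for $m\ge 3$ (since by Cauchy--Schwarz any $m$-homogeneous Dirichlet polynomial of length $N$ satisfies $\sum|a_n|\le N^{\frac12+o(1)}\big(\sum|a_n|^2\big)^{1/2}$, such a bound could only give the claimed exponent when $\frac{m-1}{m}\le\frac12$), and if $N$ denotes the number of variables it is false, the sharp order being $\sqrt{N\log m}\,\big(\sum|a_n|^2\big)^{1/2}$.

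The missing ingredient is the Kahane--Salem--Zygmund inequality, and the paper applies it not to a near-extremal Bohnenblust--Hille polynomial but to the all-ones polynomial $\sum_{|\alpha|=m,\ \alpha\in\mathbb{N}_0^N}z^\alpha$: the $\mathcal{H}^{\mathrm{rad}}_{\infty}$-norm of the associated Dirichlet polynomial $D_N$ is by definition the average over signs of the sup-norm, so KSZ gives directly $\|D_N\|_{\mathcal{H}^{\mathrm{rad}}_\infty}\le CN^{\frac{m+1}{2}}\sqrt{\log m}$; this is combined with the closed-graph inequality \eqref{nikolai} and a prime-number-theorem lower bound $\sum_{|\alpha|=m}p^{-\alpha\sigma}\gtrsim N^{m(1-(1+\varepsilon)\sigma)}$ to force $\sigma\ge\frac{m-1}{2m}$. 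Note also that your idea of randomizing a near-extremal example collapses to this: multiplying the coefficients by a fixed sign pattern does not change the $\mathcal{H}^{\mathrm{rad}}_\infty$-norm, so the quantity you must bound is again exactly the KSZ average for the all-ones polynomial. Your plan can therefore be completed, but only by inserting KSZ (plus the number-theoretic count); as written, the decisive estimate is both unproved and mis-stated.
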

\noindent For $p=\infty$ and  $E= \mathbb{C}$ this result is due to Bohnenblust-Hille \cite{BoHi31} and Hartman \cite{Ha39}.
\begin{proof}
Since $S_{p,m}^{rad} (E)$ is invariant under renorming of $E$, we may assume that $E= \ell_2^k$ (i.e. $\mathbb{C}^{k}$ with the euclidean norm).
By \eqref{kanzleramt} we need to  show the proper lower bound for $S_{p,m}^{\mathrm{rad}} (\ell_2^k)$ and the  proper  upper bound for $S_{p,m} (\ell_2^k)$. We start with the upper bound for  $S_{p,m} (\ell_2^k)$:
Assume first that $1 \leq p < \infty$.  Given $a_1, \ldots, a_N \in \ell_2^k$, we then conclude from
the Cauchy-Schwarz inequality and Kahane's inequality that
\begin{multline*}
\sum_{k=1}^N \|a_n\|_{\ell_2^k}
 \leq N^{1/2} \left(\sum_{k=1}^N \|a_n\|_{\ell_2^k}^2\right)^{1/2}
= N^{1/2} \left( \int_{\mathbb{T}^\mathbb{N}}  \Big\|   \sum_{|\alpha|=m} a_{p^\alpha} z_\alpha \Big\|^2_{\ell_2^k}\right)^{1/2}
\\
\sim
N^{1/2} \left( \int_{\mathbb{T}^\mathbb{N}}  \Big\|   \sum_{|\alpha|=m} a_{p^\alpha} z_\alpha \Big\|^p_{\ell_2^k}\right)^{1/p}
=
  N^{1/2} \left\| \sum_{n=1}^N a_n n^{-s}\right\|_{\mathcal{H}_{p,m}(\ell_2^k)}\,,
\end{multline*}
which by \eqref{tegel1} shows what we want. Now for $p= \infty$ we conclude from H\"older's inequality and the polynomial Bohnenblust-Hille inequality (in the form of \cite[Theorem~5.3]{DeMaSc12}) that
\begin{multline*}
\sum_{k=1}^N \|a_n\|_{\ell_2^k}
 \leq N^{\frac{2m}{m-1}} \left(\sum_{k=1}^N \|a_n\|_{\ell_2^k}^{\frac{2m}{m+1}}\right)^{\frac{m+1}{2m}}
\\
\leq     C^m  N^{\frac{2m}{m-1}}\sup_{z \in \mathbb{T}^\mathbb{N}} \Big\|   \sum_{|\alpha|=m} a_{p^\alpha} z_\alpha \Big\|_{\ell_2^k}
=  C^m  N^{\frac{2m}{m-1}} \Big\| \sum_{n=1}^N a_n n^{-s}\Big\|_{\mathcal{H}_{\infty,m}(\ell_2^k)}\,,
\end{multline*}
and again  \eqref{tegel1} gives the conclusion.\\
Let us turn to the lower bound of $S_{p,m}^{\mathrm{rad}} (\ell_2^k) $: A simple argument shows that $ S_{p,m}^{\mathrm{rad}} (\mathbb{C})\leq S_{p,m}^{\mathrm{rad}} (\ell_2^k) $\,,
so it remains to estimate  $ S_{p,m}^{\mathrm{rad}}(\mathbb{C})$ from below. We again start with the case $1 \leq p < \infty$. Then we know from \eqref{pm=qm}
that $ S_{p,m}^{\mathrm{rad}}(\mathbb{C})= S_{2,m}^{\mathrm{rad}}(\mathbb{C})$, and hence we may concentrate on the case $p=2$.

Clearly $S_{2,m}^{\mathrm{rad}}(\mathbb{C}) \geq
S_{2,1}^{\mathrm{rad}}(\mathbb{C})$, then we can assume that $\sigma >0$ and $c_\sigma >0$ are as in \eqref{tegel1} with $p=2$, $m=1$  and $E= \mathbb{C}$. Hence  by the prime number theorem there are constants $C_1, C_2 >0$
such that
\begin{equation} \label{pergamon}
\frac{N}{\log N} \leq C_1  \sum_{\substack{n=1 \\ \Omega(n)=1}}^N 1 \leq c_\sigma N^{\sigma}  \Big\| \sum_{\substack{n=1 \\ \Omega(n)=1}}^N  n^{-s}\Big\|_{\mathcal{H}_{2,m}^{\mathrm{rad}}(\mathbb{C})} \leq C_2 c_\sigma N^{\sigma} \Big( \frac{N}{\log N} \Big)^{1/2}\,,
\end{equation}
and this is exactly what we need.

Finally, we consider the case $p= \infty$:  We fix $\sigma >S_{\infty,m}^{\mathrm{rad}}(\mathbb{C}) $; by a standard closed graph argument there is a constant $c_{\sigma}>0$ such that
\begin{equation} \label{nikolai}
\sum_{n=1}^{\infty} \vert a_{n} \vert \frac{1}{n^{\sigma}} \leq c_{\sigma} \big\Vert \textstyle \sum_n a_{n} n^{-s} \big\Vert_{\mathcal{H}_{\infty}^{\mathrm{rad}} (\mathbb{C})} \,.
\end{equation}
We consider $\varepsilon_{\alpha}$ independent Rademacher random variables (i.e. each one taking values $\pm 1$ with probability $1/2$) for $\alpha \in \mathbb{N}_{0}^{N}$ with $\vert \alpha \vert =m$. By the Kahane-Salem-Zygmund inequality, as presented in \cite[Theorem~5.3.4]{QuQu13} there is a constant $C>0$ such that
\[
\int \sup_{z \in \mathbb{D}^{N}} \Big\vert \sum_{\substack{\alpha \in \mathbb{N}_{0}^{N} \\ \vert \alpha \vert =m}}  \varepsilon_{\alpha}(\omega) z^{\alpha} \Big\vert d \omega
 \leq C \Big( \sum_{\substack{\alpha \in \mathbb{N}_{0}^{N} \\ \vert \alpha \vert =m}} 1 \Big)^{\frac{1}{2}} \sqrt{N \log m}
\leq C N^{\frac{m+1}{2}} \sqrt{\log m} \,.
\]
We consider now the polynomial $\sum_{\substack{\alpha \in \mathbb{N}_{0}^{N} \\ \vert \alpha \vert =m}} z^{\alpha}$ and denote by $D_{N}$ the Dirichlet series associated to it by \eqref{willybrandt}. Then
\[
\Vert D_{N} \Vert_{\mathcal{H}_{\infty}^{\mathrm{rad}} (\mathbb{C})}
= \int_{0}^{1} \sup_{z \in \mathbb{D}^{N}} \Big\Vert \sum_{\substack{\alpha \in \mathbb{N}_{0}^{N} \\ \vert \alpha \vert =m}} r_{p^{\alpha}} (t) z^{\alpha} \big\Vert dt \leq C N^{\frac{m+1}{2}} \sqrt{\log m} \,.
\]
With this and \eqref{nikolai} we get that, for every $N$
\[
\sum_{\substack{\alpha \in \mathbb{N}_{0}^{N} \\ \vert \alpha \vert =m}} \frac{1}{p^{\alpha \sigma}}
\leq c_{\sigma} C N^{\frac{m+1}{2}} \sqrt{\log m} \,.
\]
All we need now is a lower bound of $\sum_{\vert \alpha \vert=m} \frac{1}{p^{\alpha \sigma}}$. By  a weak consequence of the Prime Number Theorem~$p_{j} \sim j \log j$. Then for a fixed $\varepsilon >0$ there is a constant $B>0$
such that for all $j$ we have $p_{j} \leq B j^{1+ \varepsilon}$, hence
\[
\sum_{\vert \alpha \vert=m} \frac{1}{p^{\alpha \sigma}}
= \sum_{1 \leq j_{1} \leq \ldots \leq j_{m} \leq N} \frac{1}{(p_{j_{1}} \cdots p_{j_{m}})^{\sigma}}
\geq \frac{1}{B^{m}}\sum_{1 \leq j_{1} \leq \ldots \leq j_{m} \leq N} \frac{1}{(j_{1}\ldots j_{N} )^{(1+\varepsilon)\sigma}} \,.
\]
Let us now observe that
\[
\sum_{j_{1},\ldots , j_{m} =1}^{N} \frac{1}{(j_{1}\cdots j_{m} )^{(1+\varepsilon)\sigma}}
\leq \sum_{1 \leq j_{1} \leq \ldots \leq j_{m} \leq N} m! \frac{1}{(j_{1}\cdots j_{m} )^{(1+\varepsilon)\sigma}} \, .
\]
Then
\begin{align*}
 \sum_{1 \leq j_{1} \leq \ldots \leq j_{m} \leq N} \frac{1}{(j_{1}\ldots j_{m})^{(1+\varepsilon)\sigma}}
 &
\geq \frac{1}{m!}  \sum_{j_{1}, \ldots, j_{N}=1}^{N} \frac{1}{(j_{1}\ldots j_{m} )^{(1+\varepsilon)\sigma}}
\\
&
=  \frac{1}{m!} \bigg(  \sum_{j=1}^{N} \frac{1}{j^{(1+\varepsilon)\sigma}} \bigg)^{m}
\geq D \frac{N^{m}}{N^{(1+\varepsilon)\sigma m}} \, .
\end{align*}
This altogether gives that there is a constant $K$ depending only on $m$ such that
\[
N^{m(1-(1+ \varepsilon)\sigma)} \leq K \sqrt{\log m} N^{\frac{m+1}{2}}\,,
\]
which yields $\frac{m-1}{2m} \leq \sigma$ and  gives the result.
\end{proof}

\vspace{2mm}

\subsection{The infinite dimensional case}

\noindent Let us now prove Theorem~\ref{alexanderplatz1} for
infinite dimensional Banach spaces $E$. Once again, by Corollary~\ref{friedrichstr} and equation \eqref{pariserplatz},
it suffices to check the following:
Given an infinite dimensional Banach space $E$ and  $1 \leq p \leq \infty$ the following holds
\begin{equation}\label{bahnhof zoo}
1 - \frac{1}{\cot E}\leq \spr  \,.
\end{equation}

\begin{proof}
For each fixed $t \in [0,1]$ we have
\[
\Big\Vert \sum r_{n}(t) a_{n} n^{-s} \Big\Vert_{\hp}
\leq \Big\Vert \sum r_{n}(t) a_{n} n^{-s} \Big\Vert_{\mathcal{H}_{\infty}(E)} \,.
\]
Integrating with respect to $t$ we get that  $\mathcal{H}_{\infty}^{\mathrm{rad}}(E) \subset \hpr$ for every $1\leq p \leq \infty$. Hence to find a lower bound for $\spr$ it is enough to get some lower estimate for $S_{\infty}^{\mathrm{rad}}(E)$.
What we are going to do is to work only with 1-homogeneous Dirichlet series, finding lower bounds for $S_{\infty,1}^{\mathrm{rad}} (E)$. Recall from \eqref{tegel1} that
\begin{equation*}\label{radp1}
S_{\infty,1}^{\mathrm{rad}} (E) = \inf \Big\{ \sigma >0 \colon \exists c_{\sigma} \, \, \forall
\,a_{p_{1}}, \dots, a_{p_{N}} \in E\,:\, \,
\sum_{k=1}^{N} \Vert a_{p_{k}} \Vert \leq c_{\sigma} p_{N}^{\sigma} \big\Vert \sum_{k=1}^{N} a_{p_{k}} p_{k}^{-s} \big\Vert_{\mathcal{H}_{\infty}^{\mathrm{rad}}(E)} \Big\} \,.
\end{equation*}
On the other hand for each $t$,
$$
\Big\Vert \sum_{k=1}^{N} r_{p_{k}}(t) a_{p_{k}} p_{k}^{-s} \Big\Vert_{\mathcal{H}_{\infty}(E)}  = \sup_{u \in \mathbb{T}^{N}} \Big\Vert \sum_{k=1}^{N} r_{p_{k}}(t) a_{p_{k}} u_{k} \Big\Vert_{E}
 =  \sup_{w \in \mathbb{T}^{N}} \Big\Vert \sum_{k=1}^{N} a_{p_{k}} w_{k} \Big\Vert_{E} = \Big\Vert \sum_{k=1}^{N}   a_{p_{k}} p_{k}^{-s} \Big\Vert_{\mathcal{H}_{\infty}(E)} \,.
$$
Now, integrating on $t$ we obtain
\begin{align*}\Big\Vert \sum_{k=1}^{N}   a_{p_{k}} p_{k}^{-s} \Big\Vert_{\mathcal{H}_{\infty}^{\mathrm{rad}}(E)}
= \int_{0}^{1} \Big\Vert \sum_{k=1}^{N} r_{p_{k}}(t) a_{p_{k}} p_{k}^{-s} \Big\Vert_{\mathcal{H}_{\infty}(E)} dt
 =  \Big\Vert \sum_{k=1}^{N}   a_{p_{k}} p_{k}^{-s} \Big\Vert_{\mathcal{H}_{\infty}(E)} \,.
\end{align*}
This means that $S_{\infty,1}^{\mathrm{rad}} (E)=S_{\infty,1} (E). $
But from \cite[p.554]{DeGaMaPG08} we know that
$
S_{\infty,1} (E) = 1 - \frac{1}{\cot E}
$
which completes the proof.
\end{proof}

A brief analysis of the preceding proof shows that we also get Proposition~\ref{contrast} as a by-product.

\section{Sharp estimates}
By definition the $x$th Sidon constant for Dirichlet series is given by
\begin{equation} \label{monsterdef}
S_\infty(x) \,: =\, \sup_{(a_{n})_{n \in \mathbb{N}} \subseteq \mathbb{C}}  \frac{\sum_{n \leq x}  |a_{n}|}{  \left\|  \sum_{n \leq x}  a_n n^{-s} \right\|_{\mathcal{H}_\infty(\mathbb{C})}}
\,,
\end{equation}
and its (almost) precise asymptotic is expressed in the following formula:
\begin{equation} \label{monster}
S_\infty(x) \, =\,
 \frac{\sqrt{x}}{e^{\left( \frac{1}{\sqrt{2}} + o(1) \right)   \sqrt{\log x \log\log x}}}\,;
\end{equation}
this results with weaker constants instead of $\frac{1}{\sqrt{2}}$ was proved in \cite[Theorem~4.3]{KoQu01}, the lower estimate was given in \cite[Th\'eor\`eme~1.1]{Br08}, and finally the upper estimate followed from the hypercontractivity of the Bohnenblust-Hille inequality in  \cite[Theorem~1]{DeFrOrOuSe11}.
In view of the characterization  \eqref{leipziger}, equation \eqref{monster}   represents  a sharp estimate of  the largest possible width on which a  Dirichlet series $D=\sum a_n n^{-s}$ converges uniformly but not absolutely.
Given $x \geq 2$ and $1 \leq p \leq \infty$,  asymptotically correct estimates for
\begin{equation} \label{monster-p}
S_p(x) \,: =\, \sup_{(a_{n})_{n \in \mathbb{N}} \subseteq \mathbb{C}}   \frac{\sum_{n \leq x}  |a_k|}{  \left\|  \sum_{n \leq x}  a_n \frac{1}{n^s} \right\|_{\mathcal{H}_p(\mathbb{C})}}
\end{equation}
 like \eqref{monster} are unfortunately so far unknown for $p\ne 2$. For $p=2$ we  have $S_2(x)=\sqrt{x}$ by \eqref{HeLiSe}.
An analogue of this definition in our probabilistic setting \`a la Hartman is (again $x \in \mathbb{N}$ and $1 \leq p \leq \infty$)
\begin{equation*} \label{monster-rad-p}
S_p^{\mathrm{rad}}(x) : = \sup_{(a_{n})_{n \in \mathbb{N}} \subseteq \mathbb{C}}   \frac{\sum_{n \leq x}  |a_{n}|}{  \left\|  \sum_{n \leq x}  a_n \frac{1}{n^s} \right\|_{\mathcal{H}_p^{\mathrm{rad}}(\mathbb{C})}}  \,\,.
\end{equation*}
Proposition~\ref{oranienburg} and Theorem~\ref{alexanderplatz1} (for $E=\mathbb{C}$)  suggest the following analogue of \eqref{monster}. It can be seen  as the definite result of Hartman's original  question.

\begin{theorem}\label{main}
We have, as  $x$ tends to  $\infty$
\begin{equation*}
S_p^{\mathrm{rad}}(x)   =
\begin{cases}
O(\sqrt{x}) & \,\,\,\text{ for } \,\,\,1 \leq p < \infty \\
\frac{\sqrt{x}}{e^{\left( \frac{1}{\sqrt{2}} + o(1) \right)   \sqrt{\log x \log\log x}}} & \,\,\,\text{ for } \,\,\, p = \infty\,.
\end{cases}
\end{equation*}
\end{theorem}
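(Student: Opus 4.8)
The plan is to treat the two cases separately, reducing each to results already established. For $1 \leq p < \infty$ the claim $S_p^{\mathrm{rad}}(x) = O(\sqrt{x})$ should follow essentially from Proposition~\ref{brandenburgertor}, which identifies $\mathcal{H}_p^{\mathrm{rad}}(\mathbb{C})$ with $\mathcal{H}_2$ isomorphically (with constants independent of the number of terms). Concretely, for any polynomial Dirichlet series $\sum_{n \leq x} a_n n^{-s}$ one has, by Cauchy--Schwarz and \eqref{HeLiSe},
\[
\sum_{n \leq x} |a_n| \leq \sqrt{x}\,\Big(\sum_{n \leq x} |a_n|^2\Big)^{1/2} = \sqrt{x}\,\Big\| \sum_{n \leq x} a_n n^{-s}\Big\|_{\mathcal{H}_2} \lesssim \sqrt{x}\,\Big\| \sum_{n \leq x} a_n n^{-s}\Big\|_{\mathcal{H}_p^{\mathrm{rad}}(\mathbb{C})}\,,
\]
the last step using the norm equivalence from Proposition~\ref{brandenburgertor}. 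Since $\sqrt{x}$ is already attained (take $a_n = 1$ for all $n \leq x$, or a single extremal vector), one gets the matching lower bound, so $S_p^{\mathrm{rad}}(x) \asymp \sqrt{x}$; I would state it as $O(\sqrt{x})$ as in the theorem, possibly noting the two-sided bound.

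For $p = \infty$ the strategy is to sandwich $S_\infty^{\mathrm{rad}}(x)$ between two quantities that both have the asymptotics in \eqref{monster}. The lower bound $S_\infty^{\mathrm{rad}}(x) \geq c\, S_\infty(x)$ type inequality must come from examining the proof of the lower estimate for $S_\infty(x)$ in \cite{Br08}: that extremal construction is itself probabilistic (it uses random $\pm 1$ coefficients via the Kahane--Salem--Zygmund inequality, exactly as in the proof of Proposition~\ref{17juni} above), so the extremal Dirichlet series $D_N$ there already satisfies $\|D_N\|_{\mathcal{H}_\infty^{\mathrm{rad}}(\mathbb{C})} = \|D_N\|_{\mathcal{H}_\infty(\mathbb{C})}$ up to constants — indeed whenever the coefficients are themselves chosen as signs, the Rademacher averaging in the definition \eqref{spree} is harmless. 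For the upper bound one needs $S_\infty^{\mathrm{rad}}(x) \leq S_\infty(x)$, or at least $S_\infty^{\mathrm{rad}}(x) \leq (1+o(1)) S_\infty(x)$; this should follow by integrating the pointwise inequality $\sum_{n \leq x}|a_n| = \sum_{n\leq x} |r_n(t) a_n| \leq S_\infty(x) \|\sum_{n \leq x} r_n(t) a_n n^{-s}\|_{\mathcal{H}_\infty(\mathbb{C})}$ over $t \in [0,1]$, exactly mimicking the proof of Corollary~\ref{friedrichstr}. Combining, $S_\infty^{\mathrm{rad}}(x)$ is trapped between $c\,S_\infty(x)$ and $S_\infty(x)$, and since by \eqref{monster} the ratio $S_\infty(x)/\sqrt{x}$ is $e^{-(\frac{1}{\sqrt 2}+o(1))\sqrt{\log x \log\log x}}$, the multiplicative constant $c$ is absorbed into the $o(1)$ in the exponent, yielding the stated formula.

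The main obstacle I expect is the lower bound in the case $p = \infty$: one must verify that the extremal examples realizing \eqref{monster} (from \cite{Br08}) can be taken with coefficients that are honest signs — or more generally, that the $\mathcal{H}_\infty^{\mathrm{rad}}$-norm of the relevant example does not exceed a constant times its $\mathcal{H}_\infty$-norm. If the construction in \cite{Br08} uses a Riesz-product / random-polynomial with unimodular-but-not-$\pm1$ coefficients, one has to insert an extra randomization step and invoke a Kahane-type or contraction inequality to replace those coefficients by signs without destroying the sup-norm estimate, and to check that the coefficient sum $\sum |a_n|$ is unaffected. This is the place where some genuine (though standard) probabilistic bookkeeping is needed; everything else is a transcription of the arguments already given for Corollary~\ref{friedrichstr} and Proposition~\ref{brandenburgertor}.
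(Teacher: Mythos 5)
Your plan is correct and essentially the paper's proof: the case $1\le p<\infty$ is exactly Proposition~\ref{brandenburgertor} plus Cauchy--Schwarz, the upper bound for $p=\infty$ is the sign-averaging argument giving $S_\infty^{\mathrm{rad}}(x)\le S_\infty(x)$ combined with \eqref{monster}, and the lower bound is obtained by running the Kahane--Salem--Zygmund/Dickman-function construction underlying \cite{Br08} against the $\mathcal{H}_\infty^{\mathrm{rad}}$-norm. The obstacle you flag disappears in the paper's execution: it takes the deterministic Dirichlet polynomial with all coefficients equal to $1$ supported on the $y$-smooth numbers $p_{\mathbf j}\le x$ (with $y=e^{\frac{1}{\sqrt 2}\sqrt{\log x\log\log x}}$), and since the $\mathcal{H}_\infty^{\mathrm{rad}}$-norm is itself the Rademacher average, the Kahane--Salem--Zygmund inequality bounds that norm directly, so no verification that the extremal coefficients in \cite{Br08} are honest signs is needed.
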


\noindent The formula for $1 \leq p < \infty$ is an immediate consequence of Proposition
\ref{brandenburgertor}. Let us deal with the case $p=\infty$. We first prove that for every  $x$
and $p$
\begin{equation} \label{up}
S_p^{\mathrm{rad}}(x) \leq S_p(x);
\end{equation}
then the upper estimate for $S_\infty^{\mathrm{rad}}(x)$ obviously follows from \eqref{monster}.
  By definition $S_p^{\mathrm{rad}}(x)$ is the best constant $C >0$ such that for all sequences $(a_{n})_{n \in \mathbb{N}} \subseteq \mathbb{C}$  we have
$
\sum_{n \leq x} |a_n|\leq C\big\|   \sum_{n \leq x} a_n n^{-s}\big\|_{\mathcal{H}_p(\mathbb{C})}\,.
$
 But for each $t \in [0,1] $
\[
\sum_{n \leq x} |a_n| = \sum_{n \leq x} |a_nr_n(t)|\leq C\bigg\|   \sum_{n \leq x} r_n(t) a_n n^{-s}\bigg\|_{\mathcal{H}_p(\mathbb{C})}\,,
\]
so that  \eqref{up} follows by integration.
It remains to prove the lower estimate for $S_\infty^{\mathrm{rad}}(x) $ in Theorem~\ref{main}, and the arguments we give    follow from an analysis of the proof for \eqref{monster}. Our presentation is close to that of \cite{Br08} and  also \cite[Theorem~5.4.3]{QuQu13}, and it is mainly given for the sake of completeness.
Before we start we  need some preparation from analytic number theory.

Given $k \in \mathbb{N}$, define $\mathcal J(k) =\big\{\bj = (j_1, \dotsc, j_k) \in \mathbb{N}_0^k\,: \, 1 \leq j_1 \leq \dots \leq j_k \big\}$, and for any sequence $z=(z_n)$ of complex numbers
and any $\bj \in \mathcal J(k) $ let $z_\bj = z_{j_1} \cdots  z_{j_k}$.
Moreover, for  $x > 2$ and $2 < y \le x$, choose $\ell \in \mathbb{N}$ such that $p_\ell \le y < p_{\ell+1}$  (note that with the usual notation from number theory $\ell = \pi(y)$).
With $x$ and $\ell$ define the index set
\begin{align*}
    J^-(x;y) &= \big\{ \bj = (j_1, \dotsc, j_k) \in\mathcal J(k) \,\,:\,\, k \in \mathbb{N}, \,p_\bj \le x, \,j_k \le \ell \big\}.
\end{align*}
Note first that
  $2^{\text{length}(\bj)} \leq p_\bj \leq x\,$ for every $\bj \in  J^-(x;y ) \,,$
hence the maximal length
\begin{equation} \label{length}
L:= \max \big\{ \text{length}(\bj)\,:\,  \bj \in  J^-(x;y ) \big\} \leq \frac{\log x}{\log 2}\,.
\end{equation}
The asymptotic behavior of the function $\big| J^-(x;y )\big|$ is very well described by the so called
Dickmann function $\varrho:[0,\infty[\rightarrow \mathbb{R}$ which is uniquely determined through  the following
conditions:
\begin{itemize}
\item
$\varrho$ is differentiable on $]1,\infty[$ where it satisfies
the  differential equation
 $$u\varrho'(u)+\varrho(u-1) =0\,. $$
\item
$\varrho(u)=1$ for all $0\leq u \leq 1$, and $\varrho$ is continuous at 1.
\end{itemize}

 For this definition see e.g.  \cite[III.5, p.~365,370]{Te95}. We  need the following two asymptotic estimates; the first one can be found in   \cite[III.5.5, Corollary~9.3]{Te95} (see also \cite[Eq. (1.8)]{HiTe93}), and the second in \cite[Eq. (1.7)]{HiTe93}:
\begin{itemize}
\item
Given $\varepsilon >0$, there is $C=C(\varepsilon) >0$  such for all $x,y$ with
$x > 2$ and   $e^{(\log\log x)^{\frac{5}{3}+\varepsilon}} \leq y \leq x$
\begin{equation} \label{Dicky}
\frac{1}{C}  x \varrho(u) \leq \big|J^-(x;y)\big|   \leq C  x \varrho(u)\,,
\end{equation}
where here (and in the sequel) $u=\frac{\log x}{\log y}$.
\item For  $u\to \infty$:
 \begin{equation} \label{calcu2}
\log \varrho(u) = -u \log u \,\big(1+o(1)   \big).
\end{equation}
 \end{itemize}

\noindent We are now ready to start the

\begin{proof}[Proof of the lower estimate of $S^{\mathrm{rad}}(x)$ in Theorem~\ref{main}]
 Fix $x >2$, and choose some $2 < y \leq x$
together with some  $\ell \in \mathbb{N}$ for which  $p_\ell \le y < p_{\ell+1}$
(later it will turn out that the optimal choice for $y$ in fact is $y=e^{\frac{1}{\sqrt{2}} \sqrt{\log x \log \log x}}$).
The general strategy will be  to  apply in a first step  the Kahane-Salem-Zygmund inequality \cite[Theorem~5.3.4]{QuQu13}  in order to get
\begin{equation}  \label{step1}
\sqrt{   \frac{\left|   J^-(x;y) \right|}{y\log\log x}}  \leq K S^{\mathrm{rad}}(x)\,
\end{equation}
for some universal $K$ and then  in a  second step to optimize $y$ with analytic number theory.\\
Define   the finite Dirichlet series
\[
D_{x,y}= \sum_{\bj \in  J^-(x;y)}  \frac{1}{p_\bj^s}\,,
\]
which obviously has  length $\leq x$.
Clearly
\begin{align*}
\sum_{\bj \in  J^-(x;y)}  1 = \left| J^-(x;y) \right|\,,
\end{align*}
and therefore our aim for the proof of \eqref{step1} will be to show
\begin{equation} \label{ok}
\left\| D_{x,y}  \right\|_{\mathcal{H}_\infty^{\mathrm{rad}}} \leq K \sqrt{y  \left|   J^-(x;y) \right| \log\log x}\,.
\end{equation}
 By  Bohr's fundamental lemma (see e.g. \cite[Theorem~4.4.2]{QuQu13}) we have
\[
\left\| D_{x,y}  \right\|_{\mathcal{H}_\infty^{\mathrm{rad}}}=\int_0^1 \sup_{z \in \mathbb{T}^\ell}  \Big|\sum_{\bj \in J^-(x;y )}  r_{p_\bj}(t) z_\bj   \Big| dt\,.
\]
Hence by \eqref{length} we deduce from  the Kahane-Salem-Zygmund inequality (see the version given in
\cite[Theorem~5.3.4]{QuQu13}) that
\[
\left\| D_{x,y}  \right\|_{\mathcal{H}_\infty^{\mathrm{rad}}}
\leq K  \sqrt{\ell\,| J^-(x;y )| \, \log\log x } \,.
\]
But  trivially $\ell \leq y$ which gives \eqref{ok} and hence \eqref{step1}.
To finish the proof the number theoretical results from
\eqref{Dicky} and \eqref{calcu2} enter the game.
Assume that
$y=e^{\alpha \sqrt{\log x \log \log x}}$, where $\alpha >0$ will be specified later (as already noted it will turn out that the perfect choice is $\alpha = \frac{1}{\sqrt{2}}$). Put
\[
u:=\frac{\log x}{\log y} = \frac{1}{\alpha}  \frac{\log x}{\log \log x}\,.
\]
A simple calculation then gives
\begin{equation} \label{calcu1}
u \log u = \frac{1}{2\alpha} \sqrt{\log x\log \log x}\,\,\big(1 + o(1)\big).
\end{equation}
Note also that, taking for example $\varepsilon = 1$,  $y$  lies in the interval of validity
 of inequality  \eqref{Dicky}. Then we have:
\begin{align*}
S^{\mathrm{rad}}(x)
&
\stackrel{\eqref{step1},\eqref{Dicky}}{\geq} K_{1} \sqrt{\frac{x}{\log \log x}} \sqrt{\frac{\varrho(u)}{y}}
\\&
\stackrel{\text{ def. of $y$ }}{=} K_{1} \sqrt{\frac{x}{\log \log x}} e^{\frac{\log \varrho(u)}{2}}e^{-\frac{\alpha}{2} \sqrt{\log x  \log \log x}}
\\&
\stackrel{ \eqref{calcu2}, \eqref{calcu1}}{\geq} K_{2} \sqrt{\frac{x}{\log \log x}} e^{-\big( \frac{1}{4\alpha}+ \frac{\alpha}{2}+o(1)\big)\sqrt{\log x  \log \log x}}
= K_2 \sqrt{x} e^{-\big( \frac{1}{4\alpha}+ \frac{\alpha}{2}+o(1)\big)\sqrt{\log x  \log \log x}}
\,\,.
\end{align*}
\noindent Minimizing $ \frac{1}{4\alpha}+ \frac{\alpha}{2}$ yields the optimal parameter $\alpha = \frac{1}{\sqrt{2}}$,
and we  finally arrive at the  desired  lower estimate for $S^{\mathrm{rad}}(x)$  in Theorem~\ref{main}.
\end{proof}
Again it is possible to  graduate the result from  Theorem~\ref{main} along $m$-homogeneous polynomials.
As in \eqref{monsterdef} and \eqref{monster-p} we may define
\[
S_{p,m}(x) \,\,\, \text{  and } \,\,\, S_{p,m}^{\mathrm{rad}}(x)\,\,, \,\,\, x \in \mathbb{N}
\]
replacing $\mathcal{H}_p$ by $\mathcal{H}_{p,m}$ as well as $\mathcal{H}^{\mathrm{rad}}_p$ by $\mathcal{H}^{\mathrm{rad}}_{p,m}$, and again we see that  $S_{p,m}^{\mathrm{rad}}(x) \leq S_{p,m}(x)$.
 A careful analysis of \cite[Theorem~1.4]{BaCaQu06} and \cite[Theorem~3.1]{MaQu10} (see also
\cite{DeScSe14})   proves
\begin{align} \label{balabala}
S_{\infty,m}(x) = O \left( \frac{x^{\frac{m-1}{2m}}}{(\log x)^{m-1}} \right)\,,
\end{align}
and then  the following $m$-homogeneous variant of Theorem~\ref{main} comes naturally.

\begin{theorem} \label{we finish}
 We have, as  $x$ tends to  $\infty$
\begin{equation*}
 S_{p,m}^{\mathrm{rad}}(x)   =
\begin{cases}
O(\sqrt{x}) & \,\,\,\text{ for } \,\,\,1 \leq p < \infty \\[1ex]
 O \left(  \frac{x^{\frac{m-1}{2m}}}{(\log x)^{m-1}}\right)& \,\,\,\text{ for } \,\,\,p = \infty\,.
\end{cases}
\end{equation*}
\end{theorem}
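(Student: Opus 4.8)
The plan is to handle the two ranges of $p$ separately; in both cases the statement reduces to facts already in hand, and no genuinely new analytic-number-theoretic input is needed beyond \eqref{balabala} (and, for $1\le p<\infty$, Proposition~\ref{brandenburgertor}).

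For $1\le p<\infty$ I would first observe that $\mathcal{H}_{p,m}^{\mathrm{rad}}(\mathbb{C})$ is a closed subspace of $\mathcal{H}_{p}^{\mathrm{rad}}(\mathbb{C})$ carrying the induced norm and that an $m$-homogeneous finite Dirichlet series supported on $\{n\le x\}$ is in particular a finite Dirichlet series supported on $\{n\le x\}$; comparing the suprema defining $S_{p,m}^{\mathrm{rad}}(x)$ and $S_{p}^{\mathrm{rad}}(x)$ then gives $S_{p,m}^{\mathrm{rad}}(x)\le S_{p}^{\mathrm{rad}}(x)$, and the right-hand side is $O(\sqrt{x})$ by Theorem~\ref{main}. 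Alternatively one can reprove this by hand, repeating the chain of equivalences in the proof of Proposition~\ref{brandenburgertor} with all sums restricted to indices $n$ with $\Omega(n)=m$: this identifies $\big\|\sum a_n n^{-s}\big\|_{\mathcal{H}_{p,m}^{\mathrm{rad}}(\mathbb{C})}$ with a fixed multiple of $\big(\sum|a_n|^2\big)^{1/2}$, and Cauchy--Schwarz together with the trivial bound $\#\{n\le x:\Omega(n)=m\}\le x$ finishes this case.

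For $p=\infty$ I would run the same sign-averaging argument already used to establish \eqref{up}. Fix a coefficient sequence $(a_n)$ supported on $\{n\le x:\Omega(n)=m\}$. For every $t\in[0,1]$ the sequence $(r_n(t)a_n)_n$ has the same absolute values as $(a_n)_n$, hence $\sum_{n\le x}|a_n|=\sum_{n\le x}|r_n(t)a_n|\le S_{\infty,m}(x)\,\big\|\sum_{n\le x} r_n(t)a_n n^{-s}\big\|_{\mathcal{H}_{\infty,m}(\mathbb{C})}$; integrating in $t$ and using the definition \eqref{spree} of the $\mathcal{H}_{\infty,m}^{\mathrm{rad}}$-norm gives $\sum_{n\le x}|a_n|\le S_{\infty,m}(x)\,\big\|\sum a_n n^{-s}\big\|_{\mathcal{H}_{\infty,m}^{\mathrm{rad}}(\mathbb{C})}$, i.e. $S_{\infty,m}^{\mathrm{rad}}(x)\le S_{\infty,m}(x)$ — the inequality already recorded just above the statement. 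It then only remains to insert \eqref{balabala}, namely $S_{\infty,m}(x)=O\big(x^{\frac{m-1}{2m}}/(\log x)^{m-1}\big)$.

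In this argument there is no real obstacle beyond \eqref{balabala} itself, whose proof is a careful reading of \cite[Theorem~1.4]{BaCaQu06} and \cite[Theorem~3.1]{MaQu10} and is external to the present reduction. The only conceptual content is the soft monotonicity observation that passing to randomly signed coefficients cannot increase any of these Sidon-type constants, so the deterministic $m$-homogeneous estimates transfer for free to the sign-averaged setting; everything else is bookkeeping with the definitions.
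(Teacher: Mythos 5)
Your proof is correct and follows essentially the paper's own route: the upper bounds are exactly what the paper assembles just before the statement, namely $S_{\infty,m}^{\mathrm{rad}}(x)\le S_{\infty,m}(x)$ by sign-averaging combined with \eqref{balabala} for $p=\infty$, and for $1\le p<\infty$ the $\ell_2$-identification behind Proposition~\ref{brandenburgertor} (equivalently, monotonicity in $m$ plus Theorem~\ref{main}). Note only that the paper's one-line proof regards these upper estimates as already settled and points instead to matching lower estimates (sharpness of the stated orders, obtained for $1\le p<\infty$ as in Proposition~\ref{17juni} and for $p=\infty$ by revisiting the probabilistic lower bound behind \eqref{balabala}), which the literal $O$-statements do not require and which your proposal deliberately leaves aside.
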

\noindent Only the lower estimates have to be checked. For the case $1 \leq p < \infty$ argue as in the proof of Theorem~\ref{17juni}. For $p=\infty$ analyse again the proof of the lower estimate in \eqref{balabala}.

\section{Appendix: On the abscissa of a.s.-sign convergence}

One of the remarkable results of the work of Hartman \cite{Ha39} was that, unlike  the classical strips (\eqref{Bo} and \eqref{BoBoHi}), the maximal width of the two strips of the a.s.-sign convergence coincide
(\eqref{Ha1} and \eqref{Ha2}). We already pointed out \eqref{Kinfun} that this result fits in our point of view and in fact follows from our Theorem~\ref{alexanderplatz1}.\\
We wonder now what happens with the abscissas of a.s.-sign convergence and absolute convergence for vector-valued Dirichlet series. Will it again be the case that the maximal distance between these two is the same
as the maximal width for the abscissa of  a.s.-sign uniform and absolute convergence? We answer this question now. Let us introduce some notation just for this appendix; for a given Banach space $E$ we consider the numbers
\begin{gather*}
S^{\mathrm{rad}}_{c\rightarrow a} (E)
: = \sup_{D \in \mathfrak{D}(E)} \sigma_a (D)-\sigma_c^{\mathrm{rad}}(D)
\\
S^{\mathrm{rad}}_{u\rightarrow a}(E)
: = \sup_{D \in \mathfrak{D}(E)} \sigma_a (D)-\sigma_u^{\mathrm{rad}}(D)\,
\end{gather*}
By $S^{\mathrm{rad}}_{m,c\rightarrow a} (E)$ and $S^{\mathrm{rad}}_{m,u\rightarrow a} (E)$ we denote
their graduations along the homogeneity $m \in \mathbb{N}$, defined in the obvious way. Observe that $S^{\mathrm{rad}}_{u\rightarrow a}(E)$ and $S^{\mathrm{rad}}_{m,u\rightarrow a} (E)$ are just the
$S_{\infty}^{\mathrm{rad}} (E)$ and $S_{\infty, m}^{\mathrm{rad}} (E)$ that we considered before.\\
Obviously we have the trivial estimates
\begin{equation}\label{inq}
 S^{\mathrm{rad}}_{m,u\rightarrow a}(X) \,\,\leq \,\,S^{\mathrm{rad}}_{u\rightarrow a}(X)
 \quad \text{and} \quad
 S^{\mathrm{rad}}_{m,c\rightarrow a}(X) \,\,\leq \,\,S^{\mathrm{rad}}_{c\rightarrow a}(X)
 \end{equation}
as well as
\begin{equation}\label{inq3}
 S^{\mathrm{rad}}_{u\rightarrow a}(X) \,\,\leq \,\,S^{\mathrm{rad}}_{c\rightarrow a}(X)
 \quad \text{and} \quad
 S^{\mathrm{rad}}_{m,u\rightarrow a}(X) \,\,\leq \,\,S^{\mathrm{rad}}_{m,c\rightarrow a}(X)\,.
\end{equation}
Our aim is to show that for every Banach space $E$ we have
\begin{equation}\label{reichstag}
 S^{\mathrm{rad}}_{u\rightarrow a}(E) =S^{\mathrm{rad}}_{c\rightarrow a}(E)
 =
S^{\mathrm{rad}}_{m,c\rightarrow a}(E)=1 - \frac{1}{\cot (E)}\,;
\end{equation}
and if $E$ is infinite-dimensional, then we can also put $S^{\mathrm{rad}}_{m,u\rightarrow a}(E)$ within the previous inequalities. The equalities for  $ S^{\mathrm{rad}}_{u\rightarrow a}(E)$ and
$S^{\mathrm{rad}}_{m,u\rightarrow a}(E)$ follow from Theorem~\ref{alexanderplatz1} and Proposition~\ref{contrast} with $p= \infty$.
We once again mention that the  scalar case $E=\mathbb{C}$ is due to Bohr, Bohnenblust-Hille and Hartman.
\\
We consider again the space
\[
\Rad(E) := \Big\{ a= (a_n) \in E^{\mathbb{N}}\,\,: \,\, \sum_{n=1}^\infty a_n r_n \in L_1([0,1];E)\Big\}
\]
which together with the norm
\[
\| (a_n)_{n} \|_{\Rad(E)} := \int_0^1 \Big\|  \sum_{n=1}^\infty a_n r_n(t) \Big\|_E dt
\]
forms a Banach space. Recall that $(a_n)_{n}$ belongs to $\Rad(E)$ if and only if $\sum_{n=1}^\infty a_n \varepsilon_n$
converges for almost all choices of signs $\varepsilon_n$.
In particular,
\begin{equation} \label{reform}
\sigma_c^{\mathrm{rad}} (D)  = \inf \Big\{ \sigma \in \mathbb{R}\colon \textstyle
\big(\frac{a_n}{n^\sigma}\big) \in \Rad(E) \Big\} \,.
\end{equation}

Let us note that the key ingredient to get descriptions of the width of the strip in the spirit of Maurizi-Queff\'elec (see \eqref{leipziger}, \eqref{tegel}, \eqref{tegel1} and \eqref{tegel2}) is to have a norm that provides a
proper control of the size of the partial sums, like in \eqref{gendarmenmarkt}. Observe that now, by Kahane's contraction principle, we have that  for each $N$
\[
\| (a_n)_{n=1}^N \|_{\Rad(E)}  \leq \big\| (a_n) \big\|_{\Rad(E)}\,.
\]
Proceeding as in Proposition~\ref{oranienburg}, using this instead of Proposition~\ref{unterdenlinden}, we obtain
\begin{gather}
S^{\mathrm{rad}}_{c\rightarrow a}(E) = \inf \bigg\{ \sigma >0 \, \big|\,\exists c_{\sigma}\,
\forall
D=  \sum_{n=1}^{N} a_{n} n^{-s} \in \mathfrak{D}(E)
: \,\sum_{n=1}^{N} \Vert a_{n} \Vert \leq c_{\sigma} N^{\sigma} \big\| (a_n)_{n=1}^N \big\|_{\Rad(E)} \bigg\} \label{hauptbhf1} \\
S^{\mathrm{rad}}_{m,c\rightarrow a}(E) = \inf \bigg\{ \sigma >0 \, \big|\,\exists c_{\sigma}\,
\forall
D=  \sum_{n=1}^{N} a_{n} n^{-s} \in \mathfrak{D}_m(E)
: \,\sum_{n=1}^{N} \Vert a_{n} \Vert \leq c_{\sigma} N^{\sigma} \big\| (a_n)_{n=1}^N \big\|_{\Rad(E)} \bigg\} \label{hauptbhf2}
\end{gather}
Note that in the scalar case $E=\mathbb{C}$, by Khinchine's inequality, we see that
\[
S^{\mathrm{rad}}_{c\rightarrow a}(\mathbb{C}) = \inf \bigg\{ \sigma >0 \, \big|\,\exists c_{\sigma}\,
\forall
D=  \sum_{n=1}^{N} a_{n} n^{-s} \in \mathfrak{D}(\mathbb{C})
: \,\sum_{n=1}^{N}  | a_{n}| \leq c_{\sigma} N^{\sigma} \Big(\sum_{n=1}^N |a_n|^2\Big)^{\frac{1}{2}} \bigg\} \,,
\]
and hence, applying the  Cauchy-Schwarz  inequality, we obtain  Hartman's result $S^{\mathrm{rad}}_{c\rightarrow a}(\mathbb{C}) \, = \, \frac{1}{2}$.

\noindent Finally to complete the proof of  \eqref{reichstag} it only remains to show the following

\begin{theorem}\label{hart2}
For every Banach space $E$ and every $m \in \mathbb{N}$
\[
S^{\mathrm{rad}}_{c\rightarrow a}(E)= S^{\mathrm{rad}}_{m,c\rightarrow a}(E)= 1 - \frac{1}{\cot(E)}\,.
\]
\end{theorem}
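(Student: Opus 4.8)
The plan is to prove the two inequalities $S^{\mathrm{rad}}_{c\rightarrow a}(E)\le 1-\frac1{\cot(E)}$ and $1-\frac1{\cot(E)}\le S^{\mathrm{rad}}_{m,c\rightarrow a}(E)$; since $S^{\mathrm{rad}}_{m,c\rightarrow a}(E)\le S^{\mathrm{rad}}_{c\rightarrow a}(E)$ by \eqref{inq}, these two bounds squeeze all three quantities and force the asserted equalities.

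For the upper bound I would work through the Maurizi--Queff\'elec type reformulation \eqref{hauptbhf1}. If $\cot(E)=\infty$ there is nothing to do, since $\|a_n\|\le\|(a_k)_k\|_{\Rad(E)}$ for every $n$ (as in the proof of Lemma~\ref{spandauer}) already gives $\sum_{n=1}^N\|a_n\|\le N\,\|(a_n)_{n=1}^N\|_{\Rad(E)}$, hence $S^{\mathrm{rad}}_{c\rightarrow a}(E)\le 1$. Otherwise fix $q>\cot(E)$, so that $E$ has (Rademacher) cotype $q$ with some constant $C_q$. For any finite family $a_1,\dots,a_N\in E$, the cotype inequality together with Kahane's inequality (replacing the $L_2$-average of $\sum_n r_n a_n$ by its $L_1$-average) yields $\big(\sum_{n=1}^N\|a_n\|^q\big)^{1/q}\le C'_q\,\|(a_n)_{n=1}^N\|_{\Rad(E)}$, whence by H\"older's inequality
\[
\sum_{n=1}^N\|a_n\|\;\le\;N^{1-1/q}\Big(\sum_{n=1}^N\|a_n\|^q\Big)^{1/q}\;\le\;C'_q\,N^{1-1/q}\,\|(a_n)_{n=1}^N\|_{\Rad(E)}\,.
\]
By \eqref{hauptbhf1} this shows $S^{\mathrm{rad}}_{c\rightarrow a}(E)\le 1-\frac1q$, and letting $q\downarrow\cot(E)$ completes the upper bound.

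For the lower bound I would distinguish the infinite and the finite dimensional case. If $E$ is infinite dimensional, Proposition~\ref{contrast} (with $p=\infty$) gives $S_{\infty,m}^{\mathrm{rad}}(E)=1-\frac1{\cot(E)}$; since $S_{\infty,m}^{\mathrm{rad}}(E)=S^{\mathrm{rad}}_{m,u\rightarrow a}(E)\le S^{\mathrm{rad}}_{m,c\rightarrow a}(E)$ by \eqref{inq3}, we are done. If $E$ is finite dimensional, then $\cot(E)=2$ and it suffices to check $S^{\mathrm{rad}}_{m,c\rightarrow a}(E)\ge\tfrac12$. Embedding $\mathbb{C}$ isometrically into $E$ (map $a_n\mapsto a_n x_0$ for a fixed unit vector $x_0\in E$; this changes neither $\sum_n\|a_n\|$ nor the $\Rad$-norm of the coefficient sequence) together with \eqref{hauptbhf2} gives $S^{\mathrm{rad}}_{m,c\rightarrow a}(\mathbb{C})\le S^{\mathrm{rad}}_{m,c\rightarrow a}(E)$. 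Finally, by Khinchine's inequality the reformulation \eqref{hauptbhf2} for $E=\mathbb{C}$ coincides (up to constants independent of $N$) with the one defining $S_{2,m}^{\mathrm{rad}}(\mathbb{C})$ -- Proposition~\ref{brandenburgertor} and \eqref{HeLiSe} identify $\|\cdot\|_{\mathcal{H}_{2,m}^{\mathrm{rad}}(\mathbb{C})}$ with the $\ell_2$-norm of the coefficients -- and $S_{2,m}^{\mathrm{rad}}(\mathbb{C})=\tfrac12$ by Proposition~\ref{17juni}; alternatively one may test \eqref{hauptbhf2} directly with the $m$-homogeneous series $\sum_q(p_1\cdots p_{m-1}\,q)^{-s}$, $q$ running over the primes in $\big(p_{m-1},N/(p_1\cdots p_{m-1})\big]$, whose number is $\gtrsim_m N/\log N$ by the prime number theorem, exactly as in \eqref{pergamon}. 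The only genuinely delicate step is this finite dimensional lower bound: there Proposition~\ref{contrast} is unavailable -- its uniform counterpart only yields $S_{\infty,m}^{\mathrm{rad}}(E)=\frac{m-1}{2m}<\frac12$ -- so one really must return to the scalar $\mathcal{H}_2$-picture and the prime number theorem; everything else is routine bookkeeping with the characterizations \eqref{hauptbhf1}--\eqref{hauptbhf2}, Kahane's and Khinchine's inequalities, and the facts already established.
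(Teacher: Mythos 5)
Your proposal is correct and follows essentially the same route as the paper: the upper bound via cotype, Kahane and H\"older through the reformulation \eqref{hauptbhf1}, the infinite-dimensional lower bound via Proposition~\ref{contrast} with $p=\infty$ and \eqref{inq3}, and the finite-dimensional $m$-homogeneous lower bound reduced to the scalar $\mathcal{H}_2$/Khinchine picture and the prime number theorem as in \eqref{pergamon}. You merely spell out a few points the paper leaves implicit (the $\cot E=\infty$ case, the embedding $\mathbb{C}\hookrightarrow E$, and the passage from $m=1$ to general $m$), which is fine but not a different argument.
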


\begin{proof}
We begin with the equality for $S^{\mathrm{rad}}_{c\rightarrow a}(E)$. By \eqref{inq} and the lower estimate for $S^{\mathrm{rad}}_{u\rightarrow a}(E)$ from \eqref{reichstag} we have to check
\begin{equation}\label{hart3}
 S^{\mathrm{rad}}_{c\rightarrow a} (E) \leq 1 - \frac{1}{\cot(E)}\,.
\end{equation}
Take  $q > \cot(E)$. Then for each finite Dirichlet series $D= \sum_{n=1}^N a_n \frac{1}{n^{s}} \in \mathfrak{D}(E)$ by H\"older's inequality
\begin{align*}
\sum_{n=1}^N \|a_n\|
\leq
N^{\frac{1}{q'}} \Big(\sum_{n=1}^N \|a_n\|^q\Big)^{\frac{1}{q}}
\leq
C_{q}(E) N^{\frac{1}{q'}}\big\| (a_n)\big\|_{\Rad(E)}\,.
\end{align*}
Hence we obtain from \eqref{hauptbhf1} that $S^{\mathrm{rad}}_{c\rightarrow a} \leq 1 - \frac{1}{q}$, the
conclusion.\\
We finish by giving the argument  for $S^{\mathrm{rad}}_{m,c\rightarrow a}(E)$. If $E$ is infinite-dimensional, by
\eqref{inq} and \eqref{inq3}, the result for $S^{\mathrm{rad}}_{m,u\rightarrow a}(E)$ from \eqref{reichstag}, and \eqref{hart3} we have
\[
1 - \frac{1}{\cot(E)} = S^{\mathrm{rad}}_{m,u\rightarrow a}(E)
\leq S^{\mathrm{rad}}_{m,c\rightarrow a}(E)
\leq
S^{\mathrm{rad}}_{c\rightarrow a}(E) = 1 - \frac{1}{\cot(E)}\,.
\]
On the other hand, if $E$ is finite dimensional we can argue as in \eqref{pergamon} to show that $S^{\mathrm{rad}}_{1,c\rightarrow a}(\mathbb{C}) \geq 1/2$. This completes the proof.
\end{proof}

\noindent  In  the scalar case $E= \mathbb{C}$
and in view of Proposition \ref{reform}, it is again  possible to graduate $S^{\mathrm{rad}}_{c\rightarrow a}(\mathbb{C})$ and  $S^{\mathrm{rad}}_{m,c\rightarrow a}(\mathbb{C})$, respectively,  along
the length of the Dirichlet polynomials. As in \eqref{monsterdef}, for $x \geq 1$ we define
\[
S^{\mathrm{rad}}_{c\rightarrow a}(x)  :=   \sup_{(a_{n})_{n \in \mathbb{N}} \subseteq \mathbb{C}} \frac{\sum_{n \leq x} a_n|}{\big\|(a_n)_{n \leq x} \big\|_{\Rad(E)}}\,,
\]
and similarly $S^{\mathrm{rad}}_{m,c\rightarrow a}(x)$. Then by Khinchine's inequality and the Cauchy-Schwarz inequality we obviously have
\[
S_{c \rightarrow a}^{\mathrm{rad}}(x)  = S_{m, c \rightarrow a}^{\mathrm{rad}}(x) \sim \sqrt{x}\,.
\]

\end{document}